\def\range{{\cal R}}
\newcommand{\TheTitle}{
Inner Product Free 
Krylov Methods for 
Large-Scale Inverse Problems
}
\author{Ariana~N.~Brown\footnote{Department of Mathematics, Emory University, Atlanta, GA, 30322}
\and Julianne Chung\footnotemark[1] \and James~G.~Nagy\footnotemark[1] \and Malena Sabat\'e Landman\footnotemark[1]}
\title{{\TheTitle}}
\begin{document}

\maketitle

\vspace{1cm}

\begin{abstract}
  In this study, we introduce two new Krylov subspace methods for solving rectangular large-scale linear inverse problems. The first approach is a modification of the Hessenberg iterative algorithm that is based off an LU factorization and is therefore referred to as the least squares LU (LSLU) method. The second approach incorporates Tikhonov regularization in an efficient manner; we call this the Hybrid LSLU method. Both methods are inner-product free, making them advantageous for high performance computing and mixed precision arithmetic. Theoretical findings and numerical results show that Hybrid LSLU can be effective in solving large-scale inverse problems and has comparable performance with existing iterative projection methods.
\end{abstract}

\begin{keywords}
 
 inverse problems, Krylov subspace methods, Tikhonov regularization, inner-product free methods
\end{keywords}

\begin{MSCcodes}
65F22 
65F10 
65K10 
15A29 
\end{MSCcodes}

\section{Introduction}\label{sec:intro}
Inverse problems are present in many different applications including medical and geophysical imaging, electromagnetic scattering, machine learning, and image deblurring \cite{Chung2011Inverse,Hansen2010,Vogel2002,Zhdanov2002}. We consider a large linear inverse problem of the form: \begin{equation} \label{eq:ip} b = Ax_{\rm true} + e, \end{equation} where $A \in \mathbb{R}^{m \times n}$ models the forward problem, $x_{\rm true} \in \mathbb{R}^{n}$ is the unknown solution we want to approximate, $b \in \mathbb{R}^{m}$ is the vector of observed data, and $e \in \mathbb{R}^{m}$ represents noise and other measurement errors. Solving (\ref{eq:ip}) is difficult since it belongs to a class of ill-posed inverse problems, in the sense that small changes in $b$ can produce large changes in the estimate of $x_{\rm true}$. This is due to the singular values of $A$ decaying and clustering at zero without any distinguishable gap between consecutive ones. For this reason, regularization must be implemented to diminish the instability invoked from the noise and the ill-posed nature of the problem in order to recover meaningful approximations of the solution \cite{Hansen2010}. 

A common approach for approximating $x_{\rm true}$ is iterative regularization. This consists of applying an iterative solver to the least squares problem 
\begin{equation} 
\label{eq:ir} 
\min_{x \in \mathbb{R}^n} \| b - Ax \|_2^{2}, 
\end{equation} 
where early termination produces a regularized solution \cite{chung2024computational}. The stopping iteration, which serves as a regularization parameter, is critical in constructing a solution that is not highly oscillatory or overly smooth. Alternatively, variational regularization can be used, where the aim is, for example,  to solve an optimization problem of the form: 
\begin{equation} 
\label{eq:vr}  
\min_{x \in \mathbb{R}^n} \| b - Ax \|_2^{2} + \lambda^2 \| x \|_2^{2},
\end{equation} 
where $\lambda$ is the regularization parameter and $ \| x \|_2^2$ is the regularization term \cite{chung2024computational}. Similar to the stopping iteration in iterative regularization, $\lambda$ must be chosen to prevent the approximate solution from becoming overly smooth if $\lambda$ is too large or highly oscillatory if $\lambda$ is too small.

Hybrid projection regularization combines iterative and variational techniques. This approach iteratively projects (\ref{eq:ir}) onto a small subspace with increasing dimension and applies variational regularization to the small projected problem. Since these are projection methods, they allow for explicit regularization while being `matrix-free', in the sense that they avoid explicit storage or construction of the system matrix as long as it is possible to efficiently compute matrix-vector products with $A$ (and possibly its transpose). This class of methods also provides a natural environment for estimating a good regularization parameter \cite{chung2024computational}.

Recent works on mixed-precision and highly parallel algorithms have led to an increased interest in the development of inner-product free methods. In the context of iterative methods for distributed memory machines, inner products require global communication and hence result in significant communication overhead \cite{crone1993communication,van2003iterative,de1995reducing,ghysels2013hiding,mcinnes2014hierarchical,lockhart2023performance}. The high communication costs for computing inner products may become a limiting factor in obtaining high speedups.  Moreover, different summation techniques may lead to varying accuracy of the computed inner product in floating point arithmetic \cite{higham1993accuracy,blanchard2020class}. Thus we are interested in the development of inner-product free iterative methods, specifically in the context of solving large-scale inverse problems.

\paragraph{Main Contributions} We introduce two new inner-product free iterative Krylov methods, one of which is a hybrid variant, for inverse problems with rectangular forward model matrices.  The approach is similar to recent work on the Hybrid Changing Minimal Residual Hessenberg Method \cite{brown2024h}, which was developed for problems where $A$ is square.  
We describe a new iterative projection approach called LSLU and a new hybrid projection algorithm called Hybrid LSLU that can be used for rectangular systems. Both are inner-product free Krylov methods, making them very appealing for mixed-precision arithmetic and parallel computing. We also show that the low-rank approximations computed during the iterative process can be used for efficient uncertainty quantification. Throughout the paper, we assume that $\| \cdot \|$ is the Euclidean norm unless otherwise indicated.

\section{Least Squares with LU Factorization}\label{sec:main}
In \Cref{sec:CHRM}, we review the Hessenberg Process, which serves as the backbone for the Changing Minimal Residual Hessenberg Method (CMRH) \cite{sadok1999new}, and describe how it can be used to approximately solve (\ref{eq:ir}) for square matrices $A$. Then in \Cref{sec:LSLU} we introduce the LSLU algorithm that extends CMRH to problems where $A$ is a rectangular matrix, which is a common scenario in the field of inverse problems.  We show that the Hessenberg process for rectangular matrices is directly  related to applying the Hessenberg process to the normal equations. In a similar fashion to CMRH, we impose a quasi-minimal residual optimality condition. This is comparable to the process found in LSQR \cite{paige1982lsqr} where the basis vectors are built using symmetric Lanczos on the normal equations, and the optimality conditions that are imposed minimize the residual norm.

\subsection{The Hessenberg Process and CMRH}\label{sec:CHRM}
CMRH iteratively approximates the solution to \eqref{eq:ir} in a Krylov subspace of increasing dimension,$$ {\cal{K}}_{k}(A,r_0) = span \{r_0, Ar_0,A^2r_0, \ldots,A^{k-1}r_0 \}, $$ where $A \in \mathbb{R}^{n \times n} $, $r_0 = b-Ax_0$, and $x_0$ is the initial guess of the solution. We define the Krylov matrix as
\begin{equation} \label{km}
V_{k} = [r_0, Ar_0,A^2r_0,\ldots,A^{k-1}r_0] \in \mathbb{R}^{n \times k},
\end{equation}
where $V_{k+1} = [r_0, AV_k]$.  Since $V_k$ is an ill-conditioned matrix for even small $k$, it is not explicitly constructed but only used to motivate the Hessenberg process.

For the Hessenberg process, consider the LU factorization,
$$ V_k = L_k U_k, $$ where $L_k \in \mathbb{R}^{n \times k}$ is a unit lower triangular matrix and $U_k \in \mathbb{R}^{k \times k}$ is an upper triangular matrix. The algorithm recursively computes the columns of $L_k$ but does not explicitly compute the LU factorization of $V_k$. From the construction of $V_{k+1}$, we can write the following relation,
\begin{equation} \label{eq:v_equ} V_{k+1} \begin{bmatrix} 0_{1 \times k} \\ I_k \end{bmatrix}  = L_{k+1} U_{k+1} \begin{bmatrix} 0_{1 \times k} \\ I_k \end{bmatrix} = AV_k = AL_k U_k, \end{equation} where $0_{1 \times k}$ is a row vector of zeros 
with dimensions $1 \times k$. Following \cite{sadok2012new}, we define an upper Hessenberg matrix as \begin{equation} \label{eq:H} 
H_{{k+1},k }= U_{k+1} \begin{bmatrix} 0_{1 \times k} \\ I_{k} \end{bmatrix}U_{k}^{-1} \in \mathbb{R}^{(k+1) \times k},
\end{equation} where $k<n$. Furthermore, combining (\ref{eq:v_equ}) and (\ref{eq:H}), we get the Hessenberg relation: \begin{equation}\label{eq:Hess1}
    AL_k = L_{k+1}H_{k+1,k},
\end{equation}
where the columns of $L_k$ form a linearly independent basis for ${\cal{K}}_k$.  \Cref{alg:hp1} contains a description of the Hessenberg Process for square $A$ \cite{sadok2008new}.

\begin{algorithm}[H]
\caption{Hessenberg Process for Square $A$} \label{alg:hp1}
\begin{algorithmic}[1]
\REQUIRE $A$, $b$, $x_0$, $\text{maxiter}$
\STATE $r_0=b-Ax_0$, $\beta = e_1^T r_0$; $l_1 = r_0 / \beta$
\FOR{$k = 1,\ldots,\text{maxiter}$} 
  \STATE $u = Al_k$
\FOR{$j = 1,\ldots,k$}
  \STATE $H(j,k) = u(j)$; $u = u-H(j,k)l_j$
  \ENDFOR
  \STATE $H(k+1,k) = u(k+1)$; $l_{k+1} = u/H(k+1,k)$
\ENDFOR 
\end{algorithmic}
\end{algorithm}

Notice that at each iteration of the Hessenberg process, we require one matrix-vector multiplication with $A$ and no computations of inner products.  From \Cref{alg:hp1}, we can observe that the Hessenberg process will break down if $\beta = e_1^T r_0= 0$ or $H(k+1,k) = 0$. To avoid this and to avoid severe ill-conditioning in the basis vectors, i.e, the columns of $L_k$, \cite{sadok2008new} introduces the Hessenberg process with pivoting, which is provided in \Cref{alg:hpp}. 

Building off the Hessenberg process, CMRH is an iterative projection algorithm for computing an approximate solution to (\ref{eq:ir}), where at each iteration $k$, the following least-squares problem is solved
\begin{equation}
\min_{x \in \range(L_k)} \| L_{k+1}^\dagger (b - A x) \|^2,
\end{equation}
where $\range(\cdot)$ is used to denote range of the given operator.
With initial guess $x_0$, $r_0 = b - A x_0$, and using \eqref{eq:Hess1}, it can be shown that the solution is given by $x_k = x_0 +L_k y_k$ where
\begin{equation}
\label{eq:CMRH_projected_problem}
y_k = \arg\min_{y \in \mathbb{R}^k} \| \beta e_1 - H_{k+1,k} y \|^2.
\end{equation}
Here, $\beta$ is either the first element of $r_0$ if using \Cref{alg:hp1} or the element  of $r_0$ with the highest absolute value if using \Cref{alg:hpp}.
A hybrid variant that incorporates regularization on the projected problem (\ref{eq:CMRH_projected_problem}) was described in \cite{brown2024h}.

\begin{algorithm}[H]
\caption{Hessenberg Process with Pivoting} \label{alg:hpp}
\begin{algorithmic}[1]
\REQUIRE $A$, $b$, $x_0$, $\text{maxiter}$
\STATE Define $p = [1,2,\ldots,n]^T$, and let $r_0 = b - A x_0$
\STATE Determine $i$ such that $|r_0(i)| = \|r_0\|_{\infty}$

\STATE $\beta = r_0 (i)$; $l_1 = r_0 / \beta$; $p(1)  \Leftrightarrow p(i)$
\FOR{$k = 1,\ldots, \text{maxiter}$} 
  \STATE $u = Al_k$
\FOR{$j = 1,\ldots,k$}
  \STATE $H(j,k) = u(p(j))$; $u = u-H(j,k)l_j$
  \ENDFOR
  \IF{$k<n$ and $u \neq 0$}
  \STATE Determine $i \in \{k+1,\ldots,n\}$ such that $|u(p(i))| = \|u(p(k+1:n))\|_\infty$
  \STATE $H(k+1,k) = u(p(i))$; $l_{k+1} = u/H(k+1,k)$ ; $p(k+1) \Leftrightarrow p(i)$
  \ELSE
  \STATE $H(k+1,k) = 0$; Stop
  \ENDIF
\ENDFOR 
\end{algorithmic}
\end{algorithm}

\subsection{Extension of the Hessenberg Process for rectangular systems and LSLU}
\label{sec:LSLU}
In this section, we describe an extension of the Hessenberg process for rectangular systems with $A \in \mathbb{R}^{m \times n}$, where the main difference is that we require two sets of basis vectors, one for each of the following Krylov subspaces,
\begin{eqnarray} 
{\cal{K}}_k(A^TA,v_0) = span\{v_0, A^TAv_0, (A^TA)^2v_0, \ldots, (A^TA)^{k-1}v_0 \},\label{eq:K1}  \\
{\cal{K}}_k(AA^T, r_0) = span \{ r_0, AA^Tr_0, (AA^T)^2r_0, \ldots,(AA^T)^{k-1}r_0 \}, \label{eq:K2} 
\end{eqnarray}
where $r_0 = b - Ax_0$ and $v_0 = A^Tr_0$, with $x_0$ an initial guess of the solution. 
Then, similar to CMRH, we introduce an iterative method called LSLU that minimizes an oblique projection of the residual and exploits components of the Hessenberg process for efficient computation.

Assume that no breakdowns occur in the initialization process. The Hessenberg method for rectangular systems, detailed in \Cref{alg:hpr}, generates at the $k$th iteration vectors $l_{k+1}$ and $d_{k+1}$ such that
\begin{eqnarray}
    AL_k &=& D_{k+1}H_{k+1,k}  \label{eq:nhr} \\A^TD_{k+1} &=& L_{k+1}W_{k+1}, \label{eq:nhr2} 
\end{eqnarray}
where $L_k \in \mathbb{R}^{n \times k}$ is unit lower triangular, $D_{k} \in \mathbb{R}^{m \times k}$ is unit lower triangular,  $H_{k+1,k} \in \mathbb{R}^{(k+1) \times k}$ is upper Hessenberg, and $W_{k+1} \in \mathbb{R}^{(k+1) \times (k+1)}$ is upper triangular.

From (\ref{eq:nhr}) and (\ref{eq:nhr2}), we obtain the following Hessenberg relationships:
\begin{eqnarray} \label{eq:h1} A^TAL_k = A^TD_{k+1}H_{k+1,k} = L_{k+1}W_{k+1}H_{k+1,k},\\
\label{eq:h2} AA^TD_{k+1} = AL_{k+1}W_{k+1} = D_{k+2}H_{k+2,k+1} W_{k+1} . \end{eqnarray} 
where the products $W_{k+1}H_{k+1,k}$ and $H_{k+2,k+1}W_{k+1}$ are upper Hessenberg matrices. Comparing \eqref{eq:h1} with \eqref{eq:Hess1}, we see that the Hessenberg process for rectangular systems is equivalent to the Hessenberg process for square systems applied to the normal equations, with system matrix $A^T A$ and resulting upper Hessenberg matrix $W_{k+1}H_{k+1,k}$.

\begin{algorithm}[H]
\caption{Hessenberg Process for Rectangular Systems} \label{alg:hpr}
\begin{algorithmic}[1]
\REQUIRE $A$, $b$, $x_0$, $\text{maxiter}$
\STATE Define $r_0 = b - Ax_0$, $\beta = e_1^T r_0$; $d_1 = r_0 / \beta$
\FOR{$k = 1,\ldots,\text{maxiter}$}
  \STATE $q = A^Td_{k}$
  \FOR{$j = 1,\ldots, k-1$}
  \STATE $W(j,k) = q(j)$; $q = q - W(j,k) l_j$
  \ENDFOR
  \STATE $W(k,k) = q(k)$; $l_{k} = q/W(k,k)$
  \STATE $u = Al_k$
\FOR{$j = 1,\ldots,k$}

  \STATE $H(j,k) = u(j)$; $u = u-H(j,k)d_j$
  \ENDFOR
  \STATE $H(k+1,k) = u(k+1)$; $d_{k+1} = u/H(k+1,k)$;  

\ENDFOR 
\end{algorithmic}
\end{algorithm}

Similar to the derivation in \Cref{sec:CHRM}, we can define Krylov matrices,
\begin{eqnarray}
P_k = [v_0, A^TAv_0, (A^TA)^2v_0,\ldots, (A^TA)^{k-1}v_0] \in \mathbb{R}^{n \times k}, \label{eq:Pk} \\
C_k = [r_0, AA^Tr_0, (AA^T)^2r_0,\ldots, (AA^T)^{k-1}r_0] \in \mathbb{R}^{m \times k}, \label{eq:Ck}
\end{eqnarray}
 whose columns span (\ref{eq:K1}) and (\ref{eq:K2}) respectively. It follows that $P_{k+1} = [v_0, A^TAP_k]$  and $C_{k+1} = [r_0 , AA^TC_k]$. 

Note that, by construction, the columns of $P_k$ and $L_k$ span the same space for all $k$. In particular, the vector $p_j$ can be written as a linear combination of the columns of $L_j$, which correspond to the first $j$ columns of the matrix $L_k$, for all $j\leq k$. This means,  there exists an upper triangular matrix $U_k$ such that $P_k = L_k U_k$, and since $L_k$ is unit lower triangular, this corresponds to an LU factorization of $P_k$. Note that \Cref{alg:hpr} does not explicitly compute this LU factorization, but recursively generates the columns of $L_k$. Applying this factorization provides the following relation:
\begin{equation} \label{eq:Pkrelation} P_{k+1} \begin{bmatrix} 0_{1 \times k} \\ I_k \end{bmatrix} = L_{k+1}U_{k+1} \begin{bmatrix} 0_{1 \times k} \\ I_k \end{bmatrix} = A^TAP_k = A^TAL_kU_k,  \end{equation}
where $0_{1 \times k}$ is a row vector of zeros with dimensions $1 \times k$. Thus, given \eqref{eq:h1}, we recover the upper Hessenberg matrix: 
\begin{equation} \label{eq:hequation} W_{k+1}H_{k+1,k} = U_{k+1} \begin{bmatrix} 0_{1 \times k} \\ I_k \end{bmatrix} U^{-1}_{k}, \end{equation} where $k < n$.

Following an analogous argument to the one used for $L_k$, the columns of $C_k$ and $D_k$ span the same space for all $k$ by construction, and there exists an upper triangular matrix $G_{k+1}$ such that $C_{k+1} = D_{k+1} G_{k+1}$ corresponds to an LU factorization of $C_{k+1}$. Applying this factorization to $C_{k+1} = [r_0 , AA^TC_k]$ provides the following relation:
\begin{equation} \label{eq:Ckrelation} C_{k+2} \begin{bmatrix} 0_{1 \times (k+1)} \\ I_{k+1} \end{bmatrix} = D_{k+2}G_{k+2} \begin{bmatrix} 0_{1 \times (k+1)} \\ I_{k+1} \end{bmatrix} = AA^TC_{k+1} = AA^TD_{k+1}G_{k+1},  \end{equation}
where $0_{1 \times (k+1)}$ is a row vector of zeros with dimensions $1 \times (k+1)$. Thus, given \eqref{eq:h2}, we recover the upper Hessenberg matrix: 
\begin{equation} \label{eq:hwequation} H_{k+2, k+1}W_{k+1} = G_{k+2} \begin{bmatrix} 0_{1 \times k+1} \\ I_{k+1} \end{bmatrix} G^{-1}_{k+1}, \end{equation} where $k < m$.

From \Cref{alg:hpr}, we find that the process will breakdown if either $\beta = 0$, $H(k+1,k)= 0$, or $W(k,k) = 0$. To avoid this, in practice we implement the Hessenberg process with pivoting instead, which is given in \Cref{alg:hpr2}.
\begin{algorithm}[H]
\caption{Hessenberg Process with Pivoting for Rectangular Systems} \label{alg:hpr2}
\begin{algorithmic}[1]
\REQUIRE $A$, $b$, $x_0$, $\text{maxiter}$
\STATE Define $t = [1,2,\ldots,m]^T$, $g = [1,\ldots,n]^T $.
\STATE $r_0 = b - A x_0 $ 
\STATE Determine $i$ such that $|r_0(i)| = \|r_0\|_{\infty}$
\STATE $\beta = r_0(i)$; $d_1 = r_0/ \beta$; $t(1) \Leftrightarrow t(i)$

\FOR{$k = 1,\ldots,\text{maxiter}$} 
  \STATE $q = A^Td_{k}$
  \FOR{$j = 1,\ldots, k-1$}
  \STATE $W(j,k) = q(g(j))$; $q = q - W(j,k)l_j$
  \ENDFOR
  \IF{$k<n$ and $q \neq 0$}
  \STATE Determine $i \in \{ k,\ldots,n\}$ such that $|q(g(i))| = \|q(g(k:n))\|_{\infty}$
  \STATE $W(k,k) = q(g(i))$; $l_{k} = q/W(k,k)$; $g(k) \Leftrightarrow g(i)$
  \ELSE
   \STATE break
  \ENDIF 
  \STATE $u = Al_k$
\FOR{$j = 1, \ldots,k$}
  \STATE $H(j,k) = u(t(j))$; $u = u-H(j,k)d_j$
  \ENDFOR
  \IF{$k<m$ and $u \neq 0$}
  \STATE Determine $i \in \{ k+1,\ldots,m\}$ such that $|u(t(i))| = \|u(t(k+1:m))\|_{\infty}$
  \STATE $H(k+1,k) = u(t(i))$; $d_{k+1} = u/H(k+1,k)$; $t(k+1) \Leftrightarrow t(i)$
  \ELSE
 \STATE break
  \ENDIF   
\ENDFOR 
\end{algorithmic} 
\end{algorithm}

LSLU is a new iterative projection method that, at each iteration $k$, finds an approximate solution for (\ref{eq:ir}) by minimizing the following least squares problem:
\begin{equation} \label{eq:quasi} \min_{x \in x_0+\range(L_k)}  \| D^{\dagger}_{k+1} (b-Ax) \|, \end{equation}
where $D^{\dagger}_{k+1}$ is the pseudoinverse of $D_{k+1}$. Note that the functional in (\ref{eq:quasi}) can be considered as an approximation to the residual norm of the original problem, similarly to the QMR method. More specifically, considering $x = x_0 + L_ky$ and $r_0 = b - Ax_0$, the objective function in (\ref{eq:quasi}) can be written as
\begin{eqnarray*}
  \| D^{\dagger}_{k+1} (b - A(x_0 + L_k y)) \| 
 &=& \|D^{\dagger}_{k+1} (r_0 - A L_k y) \| \\
  &=&  \| D^{\dagger}_{k+1} (r_0 - D_{k+1} H_{k+1,k}y) \| \\
   &=&  \| \beta e_{1} - H_{k+1,k}y \|, 
 \end{eqnarray*}
where $\beta$ is either the first entry of $r_0$ (\Cref{alg:hpr}) or the entry of $r_0$  with the highest absolute value (\Cref{alg:hpr2}). Thus, at iteration $k$ we solve the following subproblem, $$ y_k = \arg \min_{y \in \mathbb{R}^k} \| \beta e_1 - H_{k+1,k}y \|, $$
which is of much smaller dimension compared to the original problem. Once $y_k$ is computed, then $x_k = x_0 + L_k y_k$ provides an approximate solution of the original least squares problem (\ref{eq:ir}). The algorithm corresponding to this method can be found as a special case of the hybrid method described in \Cref{sec:hybrid}.

In \Cref{alg:hpr2}, we must find the entry with the highest absolute value of $r_0$ and two other vectors at each iteration. For any given vector $x$, this correspond to finding  $i$ such that $|x(i)| = \|x\|_{\infty}$, which can be costly as computing $\|x\|_{\infty}$ requires global communication. In order to avoid this in LSLU, we also propose the following pivoting alternative: select a small random sample of entries from $r_0, u, q$, and choose the largest value (in magnitude) in that sample. Provided that the selection is ``large enough'', we achieve a reasonable approximate solution.  

As an illustration, we use the the PRtomo example from IR Tools \cite{gazzola2018ir} (see \Cref{sec:num} for details), and we use $25, 50,$ and $100$ samples to approximate the infinity norm.  Note that the samples are only being used for determining the pivot, and that the number of samples is tiny compared to the more than $65000$ elements in each of the vectors. We provide relative reconstruction error norms per iteration in \Cref{fig:PRtom}, where the sampled LSLU approach denoted `LSLU inf est' performs similar to the LSLU approach where the pivots are determined using the actual infinity norm, denoted `LSLU'. Note that sometimes `LSLU inf est' seems to perform better than `LSLU' in that one can observe a delay in the semi-convergence phenomenon.  However, the minimal attained error norm for `LSLU inf est' is comparable or marginally larger than the one corresponding to the version with standard partial pivoting.  The relative reconstruction error norms per iteration of LSQR are provided to illustrate that the new LSLU method is competitive with existing methods.  Additional numerical results will be provided in \Cref{sec:num}.

\begin{figure}[h]
\centering
\begin{tabular}{ccc}
    {\scriptsize PRtomo Sample Size $25$} &  {\scriptsize PRtomo Sample Size $50$} & {\scriptsize PRtomo Sample Size $100$} \\ 
   \includegraphics[width=3.92cm]{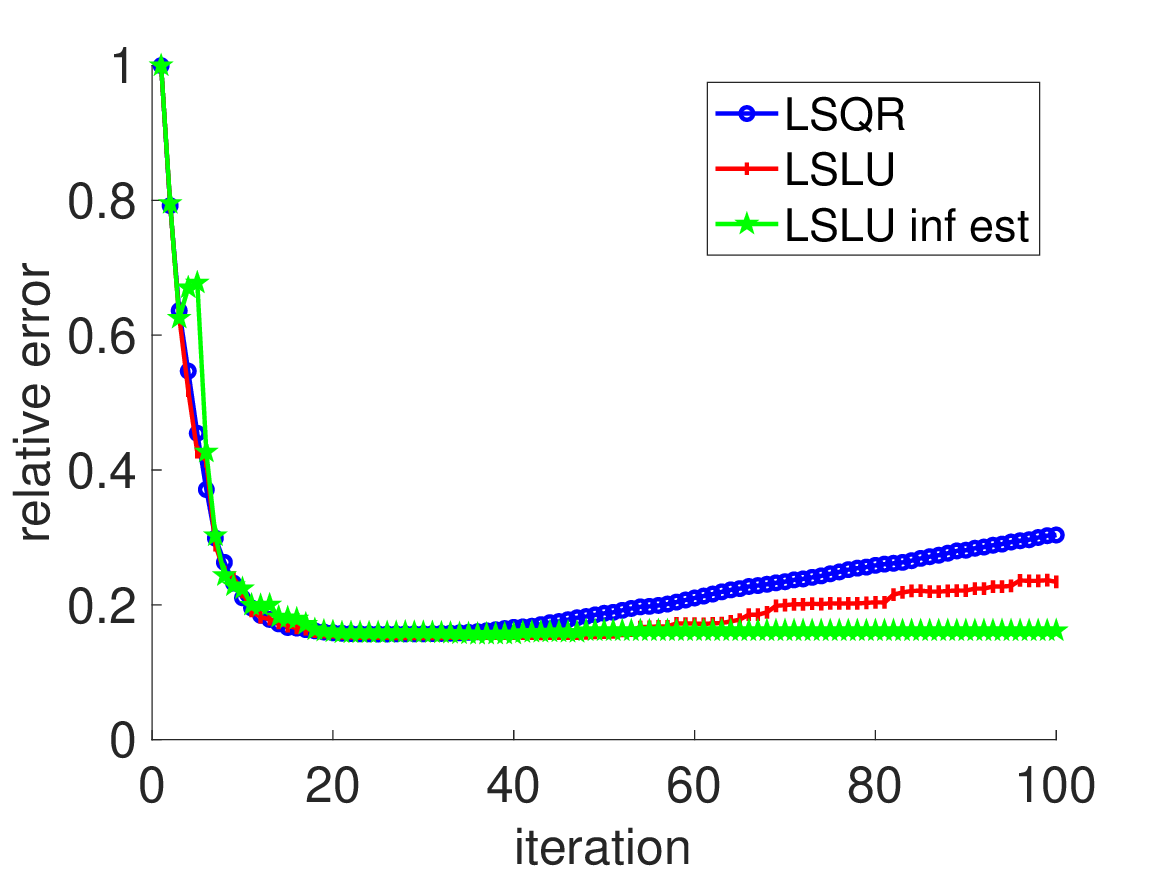} &  \includegraphics[width=3.92cm] {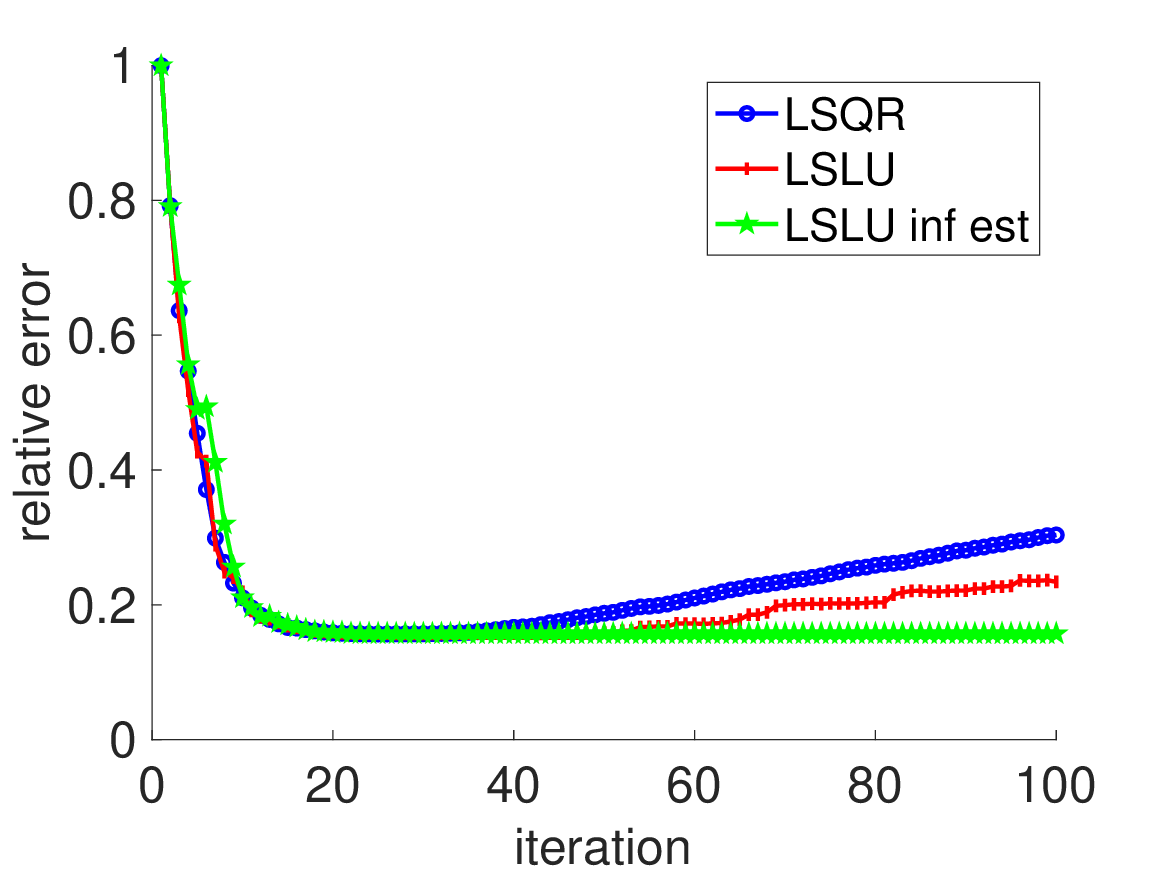} &
   \includegraphics[width=3.92cm]{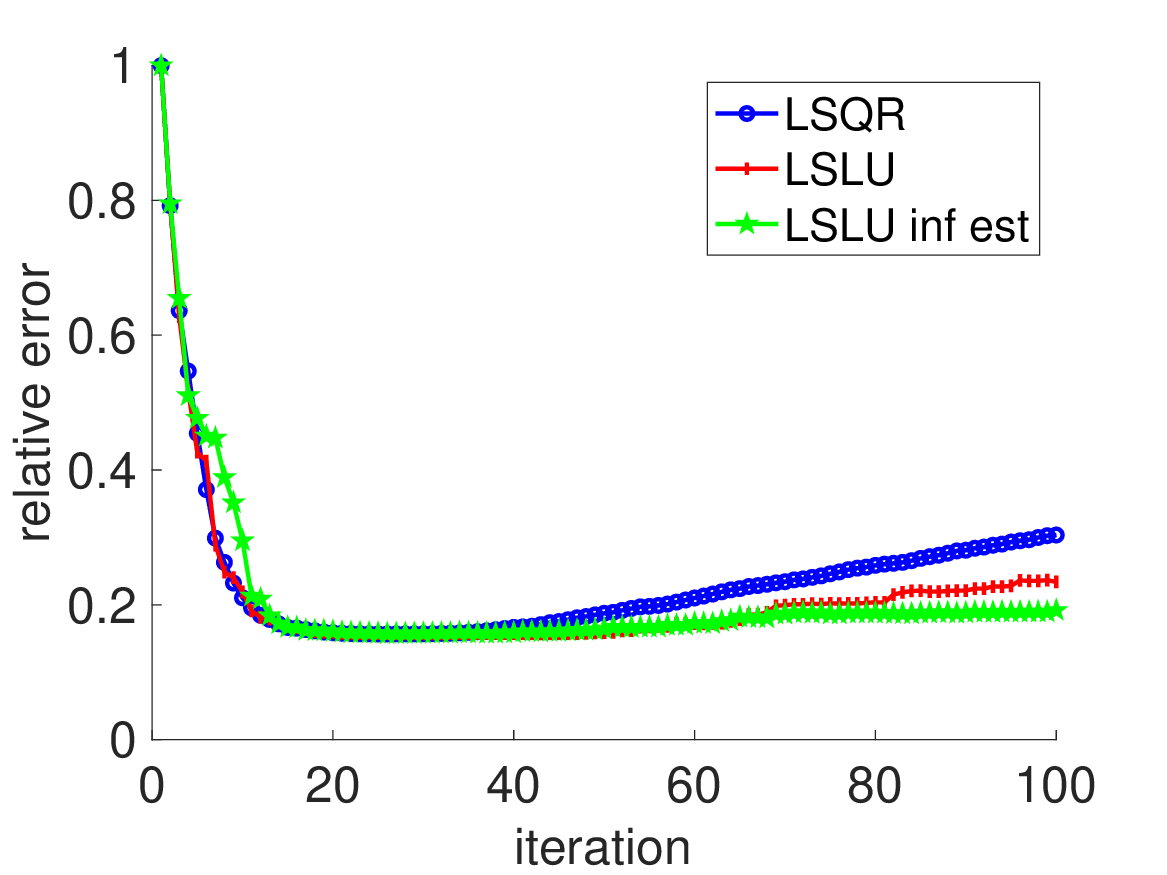} \\
    \end{tabular}
\caption{Relative reconstruction error norms per iteration for LSLU with pivoting using the infinity norm, compared to using the estimated infinity norm as the maximum from a set of randomly sampled coefficients (denoted `LSLU inf est').  Results for LSQR are provided for reference.}
\label{fig:PRtom}
\end{figure}

\subsection{Theoretical bounds for the residual norm of LSLU}
In this section, we derive a bound on the difference between the residual norms of solutions computed using LSLU and LSQR. Let $\hat{R}_{k+1}$ be an upper triangular matrix from the QR decomposition of $D_{k+1}$. We show that if the condition number of $\hat{R}_{k+1}$ does not grow too quickly, the residual norms associated with the approximate solutions of LSLU and LSQR at each iteration are close to each other. This is critical in understanding the regularizing properties of LSLU.
\begin{theorem}
Let $r_k^{QR}$ and $r_k^{LU}$ be the LSQR and LSLU residuals at the kth iteration beginning with the same initial guess $x_0 = 0$, respectively. Then 
\begin{equation}\label{eq:eqthm} \|r_k^{QR}\| \leq \|r_k^{LU}\| \leq \kappa (\hat{R}_{k+1}) \|r_k^{QR} \| \end{equation} where $\kappa (\hat{R}_{k+1}) = \|\hat{R}_{k+1}\| \|\hat{R}_{k+1}^{-1}\|$ is the condition number of $\hat{R}_{k+1}$.
\end{theorem}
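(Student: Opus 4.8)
The plan is to reduce both minimization problems to the same affine set of ``short'' vectors $z\in\mathbb{R}^{k+1}$ and then exploit a single QR factorization of $D_{k+1}$ to compare the two residual norms. First I would note that, since $x_0=0$, we have $r_0=b$ and $v_0=A^Tb$, so by the discussion following \eqref{eq:Pkrelation} the search space $\range(L_k)$ coincides with $\mathcal{K}_k(A^TA,A^Tb)$, which is exactly the space over which the $k$th LSQR iterate is defined. Thus both methods optimize over the \emph{same} subspace, and any candidate iterate may be written as $x=L_k y$. Using $b=\beta d_1=D_{k+1}(\beta e_1)$ together with the Hessenberg relation \eqref{eq:nhr}, the residual becomes
\begin{equation*}
b-AL_k y = D_{k+1}\bigl(\beta e_1 - H_{k+1,k}y\bigr) =: D_{k+1}z ,
\end{equation*}
so every residual attainable by either method has the form $D_{k+1}z$, with $z$ ranging over the affine set $\mathcal{S}=\{\beta e_1 - H_{k+1,k}y : y\in\mathbb{R}^k\}$.

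Next I would introduce the thin QR factorization $D_{k+1}=Q_{k+1}\hat{R}_{k+1}$, where $Q_{k+1}$ has orthonormal columns and $\hat{R}_{k+1}$ is upper triangular and, by the no-breakdown assumption (which makes the columns of $D_{k+1}$ independent), invertible. This yields the two identities $\|D_{k+1}z\|=\|\hat{R}_{k+1}z\|$ and $D_{k+1}^\dagger D_{k+1}z=z$. Consequently, over $z\in\mathcal{S}$, LSQR minimizes $\|\hat{R}_{k+1}z\|$ (the true residual norm) while LSLU minimizes $\|z\|$ (its quasi-residual). Writing $z^{QR}$ and $z^{LU}$ for the respective minimizers, we have $\|r_k^{QR}\|=\|\hat{R}_{k+1}z^{QR}\|$ and $\|r_k^{LU}\|=\|\hat{R}_{k+1}z^{LU}\|$.

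The left inequality is then immediate: since the LSLU iterate lies in the LSQR search space and $r_k^{QR}$ is the minimum-norm residual over that space, $\|r_k^{QR}\|\le\|r_k^{LU}\|$. For the right inequality I would chain submultiplicative bounds,
\begin{equation*}
\|r_k^{LU}\| = \|\hat{R}_{k+1}z^{LU}\| \le \|\hat{R}_{k+1}\|\,\|z^{LU}\| \le \|\hat{R}_{k+1}\|\,\|z^{QR}\| ,
\end{equation*}
where the last step uses that $z^{LU}$ minimizes $\|z\|$ over $\mathcal{S}$. Finally, $\|z^{QR}\|=\|\hat{R}_{k+1}^{-1}\hat{R}_{k+1}z^{QR}\|\le\|\hat{R}_{k+1}^{-1}\|\,\|r_k^{QR}\|$, and combining the two displays gives $\|r_k^{LU}\|\le\kappa(\hat{R}_{k+1})\,\|r_k^{QR}\|$.

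The computations here are routine; the step requiring the most care is the first one, namely verifying that LSQR and LSLU really search the identical subspace so that the minimality of $r_k^{QR}$ transfers directly, and that the representation $r=D_{k+1}z$ holds with $z$ ranging over the \emph{same} affine set $\mathcal{S}$ for both methods. Everything downstream is merely the interplay between the Euclidean norm and $\|D_{k+1}^\dagger\,\cdot\,\|$, mediated by $\hat{R}_{k+1}$. I would also confirm that $D_{k+1}$ has full column rank, guaranteed by the assumed absence of breakdown, since this is what makes $\hat{R}_{k+1}$ invertible and validates both $\|D_{k+1}z\|=\|\hat{R}_{k+1}z\|$ and $D_{k+1}^\dagger D_{k+1}=I$.
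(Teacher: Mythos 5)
Your proof is correct and follows essentially the same route as the paper's: both arguments obtain the left inequality from LSQR's minimality over $\mathcal{K}_k(A^TA,A^Tb)$, and the right inequality from the thin QR factorization of $D_{k+1}$, the LSLU optimality condition (giving $\|z^{LU}\|\le\|z^{QR}\|$), and the submultiplicative bounds involving $\|\hat{R}_{k+1}\|$ and $\|\hat{R}_{k+1}^{-1}\|$. The only difference is organizational: you derive the representation $r=D_{k+1}z$ explicitly from $b=D_{k+1}(\beta e_1)$ and the Hessenberg relation, with $z$ ranging over the affine set $\mathcal{S}$, whereas the paper reaches the same representation via the range identities $\range(D_{k+1})=\range(C_{k+1})=\mathcal{K}_{k+1}(AA^T,b)$ and the pseudoinverse optimality conditions.
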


\begin{proof}
    First, we prove the left inequality in (\ref{eq:eqthm}). Consider the residual as a function of the solution, $$ r(x) = b - Ax.$$ Then, the residual norm associated with the approximate solution at the $k$th iteration of LSQR is $$ \|r_k^{QR}\| = \| b-Ax_{k}^{QR}\| = \min_{x \in {\cal{K}}_k(A^TA,A^Tb)} \|r(x)\|. $$ Since $x_k^{QR}$ and $x_k^{LU}$ are in the Krylov subspace ${\cal{K}}_k(A^TA,A^Tb)$, then by definition,
    $$ \min_{x \in {\cal{K}}_k(A^TA,A^Tb)} \|r(x)\| \leq \|r(x_k^{LU})\| = \|r_k^{LU}\|. $$ Hence, $\|r_k^{QR}\| \leq \|r_k^{LU}\|$.

    Now we prove the right inequality in (\ref{eq:eqthm}). Since $r_k^{QR}$ and $r_k^{LU}$ are in the subspace ${\cal{K}}_k(AA^T,b)$, we can write $r_k^{QR}$ and $r_k^{LU}$ as a linear combination of any basis of ${\cal{K}}_k(AA^T,b)$. Using the Hessenberg relation, the LU decompositions of $P_{k+1}$ and $C_{k+1}$ are
    $$C_{k+1} = D_{k+1} G_{k+1}$$
    $$P_{k+1} = L_{k+1} U_{k+1}. $$
    This implies that $\range(D_{k+1}) = 
    \range(C_{k+1}) = {\cal{K}}_{k+1}(AA^T,b)$ and $\range(L_{k+1}) = \range(P_{k+1}) = {\cal{K}}_{k+1}(A^TA,A^Tb)$. Therefore, using the QR decomposition of $D_{k+1} = \hat{U}_{k+1} \hat{R}_{k+1}$ and $L_{k+1} = V_{k+1}\Tilde{R}_{k+1}$, there exist $z_k^{LU}$ and $s_k^{LU}$ in $\mathbb{R}^{k+1}$ such that  \begin{equation} \label{eq:lu_r} r_k^{LU} = D_{k+1}z_k^{LU} = \hat{U}_{k+1} \hat{R}_{k+1}z_k^{LU} = \hat{U}_{k+1}s_k^{LU} \end{equation} with $ s_k^{LU} = \hat{R}_{k+1}z_k^{LU}$. Analogously, there exist $z_k^{QR}$ and $s_k^{QR}$ in $\mathbb{R}^{k+1}$ such that \begin{equation} \label{eq:qr_r} r_k^{QR} = D_{k+1}z_k^{QR} = \hat{U}_{k+1} \hat{R}_{k+1}z_k^{QR} = \hat{U}_{k+1}s_k^{QR} \end{equation} with $s_k^{QR} = \hat{R}_{k+1}z_k^{QR}$.
    
Consider the optimality conditions of LSLU. As stated above,
$r_k^{LU} = D_{k+1}z_k^{LU}$. This implies that $D^{\dagger}_{k+1} r_k^{LU} = z_k^{LU}$. Hence, $\| D^{\dagger}_{k+1} r_k^{LU}\| = \|z_k^{LU}\|$ so
\begin{equation} \label{eq:lsluthm}
    \|z_k^{LU}\| = \min_{x \in \range(L_k)} \|D^{\dagger}_{k+1}(b-Ax)\| = \min_{x \in \range(L_k)} \|D^{\dagger}_{k+1} r(x)\|
\end{equation}
Using (\ref{eq:lsluthm}) and the fact that $x_k^{QR}$ is in ${\cal{K}}_k(A^TA,A^Tb)$ then $$ \|z_k^{LU}\| = \min_{x \in {\cal{K}}_k(A^TA,A^Tb)} \|D^{\dagger}_{k+1} r(x)\| \leq \|D^{\dagger}_{k+1} r(x_k^{QR})\| = \|z_k^{QR}\|. $$Thus $$ \|z_k^{LU}\| \leq \|z_k^{QR}\| = \| \hat{R}^{-1}_{k+1}s_k^{QR} \| \leq \| \hat{R}^{-1}_{k+1}\| \|s_k^{QR} \| = \| \hat{R}^{-1}_{k+1}\| \|r_k^{QR} \|, $$ where the equalities in the above relation come from (\ref{eq:qr_r}). On the other hand, applying (\ref{eq:lu_r}) gives: $$\|r_k^{LU}\| = \|D_{k+1}z_k^{LU}\| \leq \|D_{k+1}\| \|z_k^{LU}\|.$$

Putting the above inequalities together gives the following relation:
\begin{eqnarray*}
  \| r_k^{LU} \| 
 &=& \|D_{k+1} z_k^{LU} \| \\
  &\leq&  \| D_{k+1} \|  \|z_k^{LU}\| \\
   &\leq&  \| D_{k+1} \| \| \hat{R}^{-1}_{k+1} \| \|r_k^{QR} \|. 
 \end{eqnarray*}
Recall that $D_{k+1}$ has a QR decomposition of the form  $D_{k+1} = \hat{U}_{k+1} \hat{R}_{k+1}$, where $ \hat{U}_{k+1}$ is an orthogonal matrix. Therefore,  $\|D_{k+1}\| = \| \hat{U}_{k+1} \hat{R}_{k+1}\| = \| \hat{R}_{k+1}\|$. This results in the following: $$ \|r_k^{LU}\| \leq \|D_{k+1}\| \|\hat{R}^{-1}_{k+1}\| \|r_k^{QR} \| = \|\hat{R}_{k+1}\| \|\hat{R}^{-1}_{k+1}\| \|r_k^{QR} \| = \kappa(\hat{R}_{k+1})  \|r_k^{QR} \|.$$ 
Thus, we conclude that $ \|r_k^{QR}\|  \leq \|r_k^{LU}\| \leq \kappa(\hat{R}_{k+1})  \|r_k^{QR} \|$.
\end{proof}

\section{Hybrid LSLU} \label{sec:hybrid}
In this section, we consider a hybrid variant of LSLU for solving large-scale linear inverse problems. In addition to being inner product free, this method can compute regularized solutions efficiently and with automatically selected regularization parameters. In \Cref{sec:theory_h} we provide theoretical bounds for the residual norms of Hybrid LSLU, and in \Cref{sec:computing} we address some computational considerations.

Consider the standard Tikhonov regularization problem \eqref{eq:vr}.
The Hybrid LSLU 
method is an iterative method, where the solution at the $k$th iteration is computed as the solution to the optimization problem,
 
\begin{equation} \label{eq:TP} 
\min_{x \in x_0+\range(L_k)} \| D^{\dagger}_{k+1}(b-Ax)\|^2 + \lambda^2_k \| L^{\dagger}_{k} x \|^2, 
\end{equation}
where similar to LSLU, the residual norm is replaced by a semi-norm, and the regularization term also includes a semi-norm. It can be shown that solving (\ref{eq:TP}) is equivalent to solving 
\begin{equation} \label{eq:TSP} 
y_{\lambda ,k} = \arg \min_{y \in \mathbb{R}^k} \| \beta e_1 - H_{k+1,k}y \|^2 +  \lambda^2_k \|y\|^2, 
\end{equation} 
where $\beta$ is the largest entry in $r_0$ (when considering the Hessenberg method implementation with pivoting), and we can project back onto the original subspace using $x_k = x_0 + L_ky_{\lambda ,k}$. An implementation of Hybrid LSLU with pivoting is provided in \Cref{alg:HLSLU}, which also corresponds to LSLU if one sets $\lambda$ = 0. 
As a hybrid approach, the regularization parameter, denoted as $\lambda_k$ in (\ref{eq:TP}) and (\ref{eq:TSP}), can be selected at each iteration. This will be discussed in \Cref{sec:paramselect}.

\begin{algorithm}[H]
\caption{Hybrid LSLU} \label{alg:HLSLU}
\begin{algorithmic}[1]
\REQUIRE $A$, $b$, $x_0$, $\text{maxiter}$, $\text{RegParam}$
\STATE Define $t = [1,2,\ldots,m]^T$, $g = [1,\ldots,n]^T $.
\STATE $r_0 = b - A x_0 $ 
\STATE Determine $i$ such that $|r_0(i)| = \|r_0\|_{\infty}$
\STATE $\beta = r_0(i)$; $d_1 = r_0/ \beta$; $t(1) \Leftrightarrow t(i)$

\FOR{$k = 1,\ldots,\text{maxiter}$} 
  \STATE $q = A^Td_{k}$
  \FOR{$j = 1,\ldots, k-1$}
  \STATE $W(j,k) = q(g(j))$; $q = q - W(j,k)l_j$
  \ENDFOR
  \IF{$k<n$ and $q \neq 0$}
  \STATE Determine $i \in \{ k,\ldots,n\}$ such that $|q(g(i))| = \|q(g(k:n))\|_{\infty}$
  \STATE $W(k,k) = q(g(i))$; $l_{k} = q/W(k,k)$; $g(k) \Leftrightarrow g(i)$
  \ELSE
   \STATE break
  \ENDIF 
  \STATE $u = Al_k$
\FOR{$j = 1, \ldots,k$}
  \STATE $H(j,k) = u(t(j))$; $u = u-H(j,k)d_j$
  \ENDFOR
  \IF{$k<m$ and $u \neq 0$}
  \STATE Determine $i \in \{ k+1,\ldots,m\}$ such that $|u(t(i))| = \|u(t(k+1:m))\|_{\infty}$
  \STATE $H(k+1,k) = u(t(i))$; $d_{k+1} = u/H(k+1,k)$; $t(k+1) \Leftrightarrow t(i)$
  \ELSE
 \STATE break
  \ENDIF   
  \STATE Find regularization parameter $\lambda_k$ according to the $\text{RegParam}$ scheme. 
  \STATE Compute $y_{\lambda_k,k}$ being to the minimizer of $\| \beta e_1 - H_{k+1,k}y \|_2^2 + \lambda_k^2 \|y\|_2^2$ 
  \STATE $ x_k = x_0 + L_k y_{\lambda_k,k}$
\ENDFOR 
\end{algorithmic} 
\end{algorithm}

\subsection{Theoretical bounds for the residual norms for Hybrid LSLU}
\label{sec:theory_h}
Similar to LSLU and LSQR, the residual norms of Hybrid LSLU
and Hybrid LSQR can be bounded in an analogous fashion. These bounds provide insight on the regularizing properties of Hybrid LSLU. Let $\lambda$ be fixed and let
\begin{equation}
\label{eq:Dbar}
\overline{D}_{k+1} = \begin{bmatrix} D_{k+1} & 0 \\ 0 & L_{k} \end{bmatrix}, 
\end{equation}
with $D_{k+1}$ and $L_{k}$ defined by the Hessenberg relations (\ref{eq:nhr}) and (\ref{eq:nhr2}). We find that if the condition number of $\overline{D}_{k+1}$ does not grow too quickly, then the residual norm associated to the solution obtained with Hybrid LSLU is close to the residual norm of the solution obtained with Hybrid LSQR.
\begin{theorem}\label{thm:bounds}
Let $hr_k^{QR}$ and $hr_k^{LU}$ be the Hybrid LSQR and Hybrid LSLU residuals at the kth iteration beginning with the same initial residual $r_0$, respectively. Then 
\begin{equation}\label{eq:eqthm2} \|hr_k^{QR}\| \leq \|hr_k^{LU}\| \leq \kappa (\overline{D}_{k+1}) \|hr_k^{QR} \| \end{equation} where $\kappa (\overline{D}_{k+1}) = \|\overline{D}_{k+1}\| \|\overline{D}_{k+1}^{\dagger}\|$ is the condition number of $\overline{D}_{k+1}$.
\end{theorem}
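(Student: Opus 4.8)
The plan is to mirror the proof of the previous theorem, replacing the residual map $r(x) = b - Ax$ by the augmented (Tikhonov) residual $hr(x) = \begin{bmatrix} b - Ax \\ -\lambda x \end{bmatrix}$, whose squared norm $\|hr(x)\|^2 = \|b-Ax\|^2 + \lambda^2\|x\|^2$ is exactly the Tikhonov objective in \eqref{eq:vr}, and replacing $D_{k+1}$ by the block matrix $\overline{D}_{k+1}$ of \eqref{eq:Dbar}. Taking $x_0 = 0$ (so that $r_0 = b$ and both methods optimize over $\mathcal{K}_k(A^TA,A^Tb) = \range(L_k)$, and the regularizer $\|L_k^\dagger x\|$ reduces to $\|y\|$ as in \eqref{eq:TSP}), Hybrid LSQR computes $x_k^{QR}$ minimizing the unweighted augmented residual $\|hr(x)\|$ over this subspace, while Hybrid LSLU computes $x_k^{LU}$ minimizing a weighted version. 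The central observation is that, since the pseudoinverse of a block-diagonal matrix is the block diagonal of the pseudoinverses, $\|\overline{D}_{k+1}^\dagger hr(x)\|^2 = \|D_{k+1}^\dagger(b-Ax)\|^2 + \lambda^2\|L_k^\dagger x\|^2$, which is precisely the Hybrid LSLU objective \eqref{eq:TP}. Thus Hybrid LSLU minimizes $\|\overline{D}_{k+1}^\dagger hr(x)\|$ over $\range(L_k)$.

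For the left inequality I would argue exactly as before: since $x_k^{LU} \in \range(L_k)$ is a feasible point for the Hybrid LSQR minimization of the true augmented residual norm, optimality of $x_k^{QR}$ yields $\|hr_k^{QR}\| \le \|hr(x_k^{LU})\| = \|hr_k^{LU}\|$.

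The right inequality is where the block structure does the work. First I would note that $\overline{D}_{k+1}$ has full column rank (being block diagonal with unit-triangular, hence full-column-rank, blocks $D_{k+1}$ and $L_k$), so $\overline{D}_{k+1}^\dagger \overline{D}_{k+1} = I$. Next, using the Hessenberg relation \eqref{eq:nhr} together with $r_0 = \beta D_{k+1}e_1$, for any $x = L_k y$ one has $hr(x) = \overline{D}_{k+1}\begin{bmatrix} \beta e_1 - H_{k+1,k}y \\ -\lambda y \end{bmatrix}$; hence both $hr_k^{LU}$ and $hr_k^{QR}$ lie in $\range(\overline{D}_{k+1})$, and I may write $hr_k^{LU} = \overline{D}_{k+1}z_k^{LU}$ and $hr_k^{QR} = \overline{D}_{k+1}z_k^{QR}$. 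Applying $\overline{D}_{k+1}^\dagger$ gives $z_k^{LU} = \overline{D}_{k+1}^\dagger hr_k^{LU}$ and $z_k^{QR} = \overline{D}_{k+1}^\dagger hr_k^{QR}$, so that by the Hybrid LSLU optimality condition $\|z_k^{LU}\| = \min_{x \in \range(L_k)}\|\overline{D}_{k+1}^\dagger hr(x)\| \le \|\overline{D}_{k+1}^\dagger hr_k^{QR}\| = \|z_k^{QR}\|$. Introducing the QR factorization $\overline{D}_{k+1} = \hat{U}_{k+1}\hat{R}_{k+1}$ with $\hat{U}_{k+1}$ having orthonormal columns, I would bound $\|z_k^{QR}\| = \|\hat{R}_{k+1}^{-1}\hat{U}_{k+1}^T hr_k^{QR}\| \le \|\overline{D}_{k+1}^\dagger\|\,\|hr_k^{QR}\|$ and $\|hr_k^{LU}\| = \|\overline{D}_{k+1}z_k^{LU}\| \le \|\overline{D}_{k+1}\|\,\|z_k^{LU}\|$, and chain these to obtain $\|hr_k^{LU}\| \le \|\overline{D}_{k+1}\|\,\|\overline{D}_{k+1}^\dagger\|\,\|hr_k^{QR}\| = \kappa(\overline{D}_{k+1})\|hr_k^{QR}\|$.

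The main obstacle is the setup rather than the estimates. One must verify that the Hybrid LSLU objective \eqref{eq:TP} coincides with $\|\overline{D}_{k+1}^\dagger hr(x)\|^2$ (the block-diagonal pseudoinverse identity), that Hybrid LSQR truly minimizes the unweighted residual $\|hr(x)\|$ over the same subspace $\range(L_k)$ — which is what makes both the feasibility argument for the left inequality and the range containment $hr_k^{QR} \in \range(\overline{D}_{k+1})$ valid — and that $\overline{D}_{k+1}$ has full column rank so that $\overline{D}_{k+1}^\dagger\overline{D}_{k+1} = I$. Once these structural facts are established, the two-sided bound follows from the same optimality-plus-conditioning argument used for \eqref{eq:eqthm}, with $\overline{D}_{k+1}$ playing the role of $D_{k+1}$ and $\kappa(\overline{D}_{k+1}) = \|\hat{R}_{k+1}\|\,\|\hat{R}_{k+1}^{-1}\|$ replacing $\kappa(\hat{R}_{k+1})$.
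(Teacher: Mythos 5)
Your proposal is correct and takes essentially the same route as the paper's proof: the same augmented residual $hr(x)$, the same feasibility-plus-optimality argument of Hybrid LSQR for the left inequality, and for the right inequality the same factorization $hr_k = \overline{D}_{k+1}z_k$ combined with the Hybrid LSLU optimality condition $\|z_k^{LU}\| = \min_{x \in \range(L_k)} \|\overline{D}_{k+1}^{\dagger} hr(x)\|$ and the norm bounds $\|z_k^{QR}\| \leq \|\overline{D}_{k+1}^{\dagger}\|\,\|hr_k^{QR}\|$ and $\|hr_k^{LU}\| \leq \|\overline{D}_{k+1}\|\,\|z_k^{LU}\|$. The only differences are matters of explicitness (the block-diagonal pseudoinverse identity, the representation $hr(L_k y) = \overline{D}_{k+1}\bigl[\,\beta e_1 - H_{k+1,k}y;\ -\lambda y\,\bigr]$, and the full column rank of $\overline{D}_{k+1}$), all of which the paper uses implicitly.
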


\begin{proof}
First, we prove the left inequality in (\ref{eq:eqthm2}). We can define the hybrid residual as a function of the solution, $$ hr(x) = \begin{bmatrix}
    b \\ 0
\end{bmatrix}  - \begin{bmatrix}
   A \\ \lambda I 
\end{bmatrix}x.$$
Since $x_k^{QR}$ and $x_k^{LU}$ are in the Krylov subspace ${\cal{K}}_k(A^TA,A^Tb)$, by the optimality conditions of Hybrid LSQR, $$ \|hr_k^{QR}\| = \min_{x \in \range(L_k)}\|hr(x)\| \leq \| hr(x_k^{LU})\| = \|hr_k^{LU}\|.$$ 
Hence $\|hr_k^{QR}\| \leq \|hr_k^{LU}\|$.

Now we prove the right inequality in (\ref{eq:eqthm2}). Since $b-Ax \in {\cal{K}}_{k+1}(AA^T,b) = \range(D_{k+1}$), then for any $x \in {\cal{K}}_k(A^TA,A^Tb)$ and $x_k^{QR}$, $x_k^{LU} \in  {\cal{K}}_k(A^TA,A^Tb) = \range(L_{k}$) 
we can write $hr_k^{LU}$ and $hr_k^{QR}$ as a linear combination of the columns of $\overline{D}_{k+1}$ defined in (\ref{eq:Dbar}).

Let $hr_k^{LU} = \overline{D}_{k+1} z_k^{LU}$ and $hr_k^{QR} =\overline{D}_{k+1} z_k^{QR}$. This implies that $z_k^{LU} = \overline{D}^{\dagger}_{k+1} hr_k^{LU}$ and $z_k^{QR} = \overline{D}^{\dagger}_{k+1} hr_k^{QR}$. Hence, $\|\overline{D}^{\dagger}_{k+1} hr_k^{LU}\| = \|z_k^{LU}\|$ and $\|\overline{D}^{\dagger}_{k+1} hr_k^{QR}\| = \|z_k^{QR}\|$. By the optimality conditions of Hybrid LSLU,
\begin{equation} \label{eq:opt}
    \|z_k^{LU}\| = \min_{x \in \range(L_k)} \left\| \begin{bmatrix}
        D^{\dagger}_{k+1} & 0 \\ 0 & L^{\dagger}_k
    \end{bmatrix} \left(\begin{bmatrix}
        b \\ 0
    \end{bmatrix} - \begin{bmatrix}
        A \\ \lambda I
    \end{bmatrix}\right)x \right \| = \min_{x \in \range(L_k)} \| \overline{D}^{\dagger}_{k+1} hr(x)\|.
\end{equation}
Using (\ref{eq:opt}) and the fact that $x_k^{QR}$ is in ${\cal{K}}_k(A^TA,A^Tb)$, $$ \|z_k^{LU}\| = \min_{x \in \range(L_k)} \| \overline{D}^{\dagger}_{k+1} hr(x)\| \leq \| \overline{D}^{\dagger}_{k+1} hr(x_k^{QR})\| = \|z_k^{QR}\|. $$Thus $$\|z_k^{LU}\| \leq \|z_k^{QR}\| = \| \overline{D}^{\dagger}_{k+1} hr_k^{QR} \| \leq  \| \overline{D}^{\dagger}_{k+1} \| \|hr_k^{QR} \|.$$

Putting the above inequalities together gives the following relation,
\begin{eqnarray*}
  \| hr_k^{LU} \| 
 &=& \|\overline{D}_{k+1} z_k^{LU} \| \\
  &\leq&  \| \overline{D}_{k+1} \|  \|z_k^{LU}\| \\
   &\leq&  \| \overline{D}_{k+1} \| \| \overline{D}^{\dagger}_{k+1} \| \|hr_k^{QR} \|, 
 \end{eqnarray*}
 so we conclude that $\|hr_k^{QR}\| \leq \|hr_k^{LU}\| \leq \kappa (\overline{D}_{k+1}) \|hr_k^{QR}\|$.
\end{proof}

To illustrate the behavior of the residual norms for Hyrbid LSLU and Hybrid LSQR as well as to investigate the bound in \Cref{thm:bounds}, we plot in \Cref{fig:PRresidual} the residual norms per iteration for three different test problems: PRtomo, PRspherical, and PRseismic from the IR tools package \cite{gazzola2018ir}. We fix  $\lambda= 0.01$ and plot residual norms for Hybrid LSLU along with the lower and upper bounds from \Cref{thm:bounds}. We observe that the residual norms for Hybrid LSLU and Hybrid LSQR remain close for PRtomo and PRspherical.  As expected, the residual norms for solutions computed using Hybrid LSQR provide a lower bound for residual norms for solutions computed using Hybrid LSLU. The upper bound from \Cref{thm:bounds} given by $\kappa (\overline{D}_{k+1}) \|hr_k^{QR}\|$ becomes looser with more iterations. For details regarding the test problems, see \Cref{sec:num}.

\begin{figure}[h]
\centering
\begin{tabular}{ccc}
    {\scriptsize PRtomo} &  {\scriptsize PRspherical} & {\scriptsize PRseismic} \\ 
   \includegraphics[width=3.92cm]{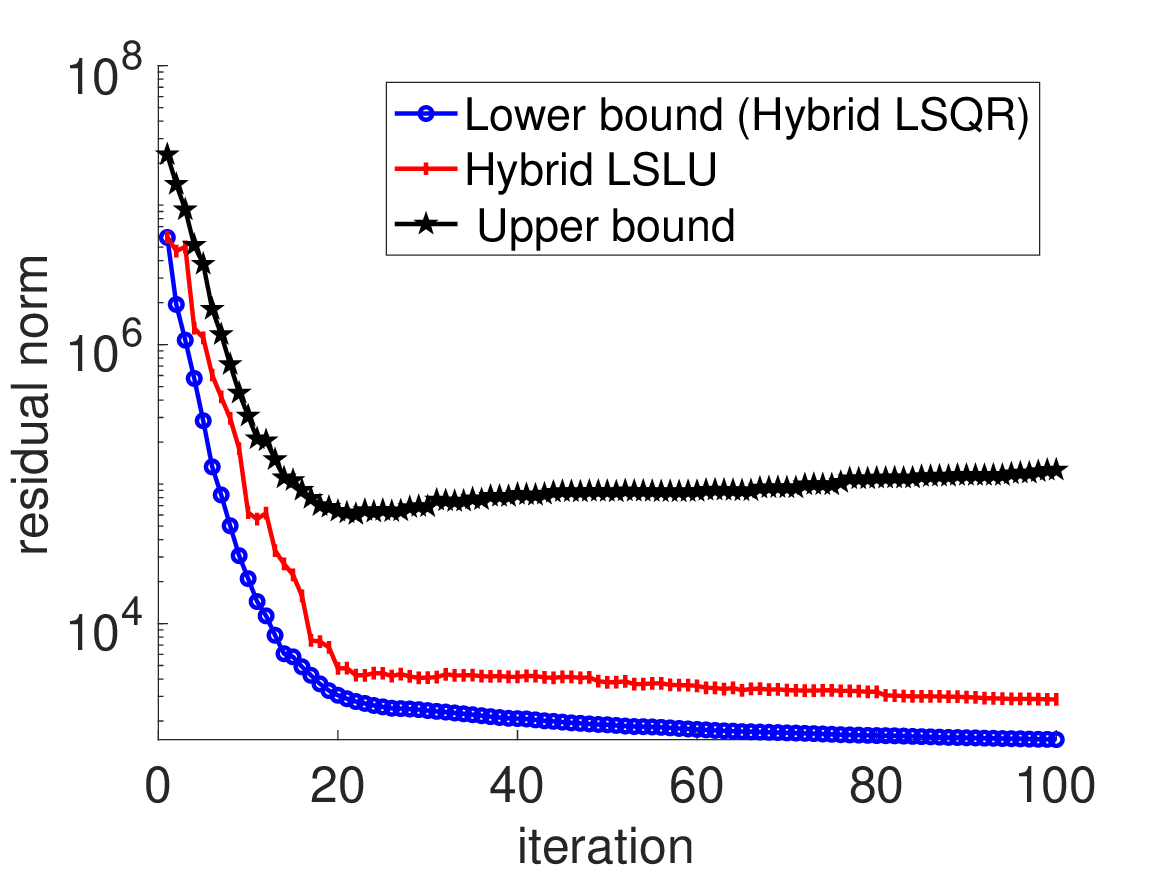} &  \includegraphics[width=3.92cm] {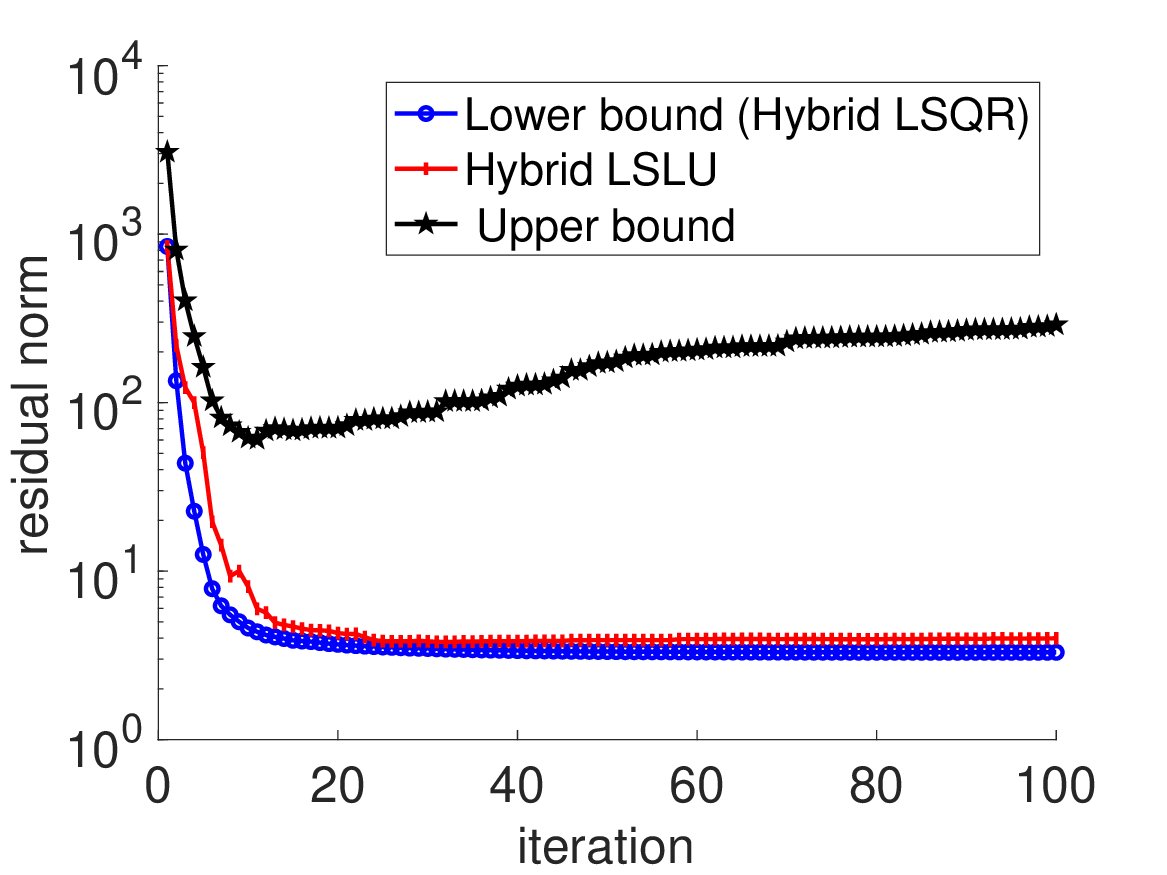} &
   \includegraphics[width=3.92cm]{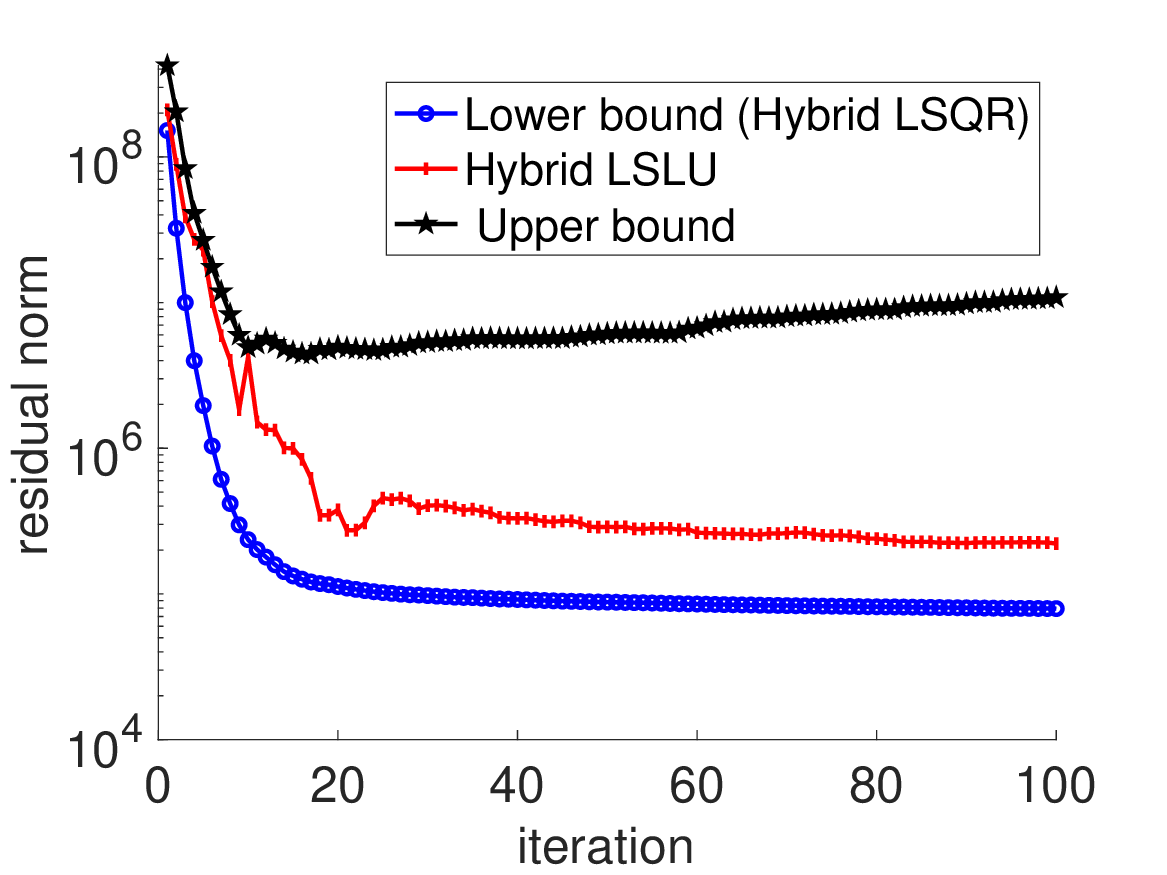} \\
    \end{tabular}
    
\caption{Residual norms per iteration for Hybrid LSLU, as well as corresponding bounds from \Cref{thm:bounds}. Note that the lower bound corresponds to Hybrid LSQR residual norms.}
\label{fig:PRresidual}
\end{figure}

\subsection{Computational Considerations}
\label{sec:computing}
In this section we describe some of the computational aspects of Hybrid LSLU. In particular, we describe methods for selecting regularization parameter $\lambda_k$ at each iteration in \Cref{sec:paramselect} and stopping criterion in \Cref{sec:stopping}.

\subsubsection{Selecting Regularization Parameters}
\label{sec:paramselect}
Our objective is to find an appropriate regularization parameter $\lambda_k$ at each iteration of Hybrid LSLU that will not cause the regularized solution $x_k$ to be overly oscillatory or too smooth. There are various approaches for selecting regularization parameters within hybrid projection methods \cite{chung2024computational}.  We consider Tikhonov regularization for the projected problem (\ref{eq:TSP}). SVD based approaches can be used to find a good estimate for the regularization parameter $\lambda_k$, since the projected problem (\ref{eq:TSP}) is significantly smaller than (\ref{eq:TP}).  

Although not available in practice, we compute the optimal regularization parameter for simulated data to demonstrate the potential benefits of Hybrid LSLU. The optimal regularization parameter requires knowledge of the true solution and is obtained by minimizing the following expression:
\begin{equation} \label{eq:lambda}  
 \lambda_k =  \arg \min_{\lambda} \| x_{\lambda,k} - x_{\rm true}\|_2^2, \end{equation}
where $x_{\lambda,k}$ is the approximate solution at the $k$th iteration with the regularization parameter $\lambda$. Assume that $x_{\lambda,k} = x_0 + L_ky_{\lambda,k}$. Then (\ref{eq:lambda}) can be rewritten as:
\begin{equation}\label{eq:1}
 \min_{\lambda} \|x_{\lambda,k} - x_{\rm true}\|_2^{2} = \min_{\lambda} \| x_0 + L_ky_{\lambda,k} - x_{\rm true}\|_{2}^{2}.
\end{equation}
From  (\ref{eq:1}), we replace $y_{\lambda,k}$ with the solution to the normal equations of (\ref{eq:TSP}) to get
\begin{equation}\label{eq:2}
 \min_{\lambda} \|x_0 +L_k(H_{k+1,k}^{T}H_{k+1,k} + \lambda ^2 I)^{-1}H_{k+1,k}^{T} \beta e_1 - x_{\rm true}\|_{2}^{2}
\end{equation}
and use the SVD of $H_{k+1,k} = U_k \Sigma_k V_k^T$, to simplify (\ref{eq:2})
\begin{equation}\label{eq:3}
 \min_{\lambda} \|x_0 + L_k(V_k \Sigma_k ^T \Sigma_k V_k^T + \lambda^2 I)^{-1} V_k \Sigma_k ^{T} U_k^{T} \beta e_1  - x_{\rm true}\|_{2}^{2}.
\end{equation}
Thus, (\ref{eq:3}) is equivalent to 
\begin{equation}\label{eq:4}
 \min_{\lambda} \|x_{\lambda,k} - x_{\rm true}\|_2^{2} = \min_{\lambda} \| x_0 + L_k V_k( \Sigma_k ^T \Sigma_k + \lambda^2 I)^{-1}  \Sigma_k ^{T} U_k^{T} \beta e_1 - x_{\rm true}||_{2}^{2}.
\end{equation}
We again emphasize that that this is not a realistic regularization parameter choice criterion since it requires the knowledge of the true solution, but we can use it to demonstrate the performance of Hybrid LSLU.

In practice, techniques like the Generalized Cross Validation (GCV) method or the Discrepancy Principle can be implemented to estimate $\lambda_k$, at each iteration.
In this paper we focus on the GCV method, which is a predictive statistics-based approach where prior estimates of the error norm are not needed \cite{Hansen2010,golub1979generalized}. Here, we assume that the regularization parameter $\lambda_k$ should be able to predict any missing information.  Although the GCV method is typically applied for the original problem, we follow a common approach in hybrid projection methods and use the GCV function for the projected problem \eqref{eq:TSP}, with matrix $H_{{k+1},k}$ from  (\ref{eq:nhr}). The chosen regularization parameter minimizes the predictive error through the minimization of the GCV function: \begin{equation} \label{eq:gcv1}  G_{H_{k+1}, \beta e_1}(\lambda) = \frac{k \| (I - H_{k+1,k} H^{\dagger}_{\lambda}) \beta e_1 \|_2^{2}}{trace(I - H_{k+1,k} H^{\dagger}_{\lambda})^2} \end{equation}
where $H^{\dagger}_{\lambda} = (H^{T}_{k+1,k}H_{k+1,k} + \lambda^2 I)^{-1} H^{T}_{k+1,k}$. 

Using the SVD of $H_{k+1,k}$ , (\ref{eq:gcv1}) can be rewritten as: 
\begin{equation} \label{eq:gcv2}  G_{H_{k+1}, \beta e_1}(\lambda) =  \displaystyle \frac{ k \beta^2 \left(\displaystyle \sum_{i=1}^{k} \left(\frac{\lambda^2}{\sigma_i^2 + \lambda^2}[U^T_k e_1]_i\right)^2 + \left(\left[U^T_k e_1\right]_{k+1}\right)^2\right)}{\left(1+ \displaystyle \sum_{i=1}^k \frac{\lambda^2}{\sigma_i^2 + \lambda^2}\right)^2},\end{equation}
with the GCV parameter at the $k$th iteration being $\displaystyle \lambda_k = \arg \min_{\lambda}   G_{H_{k+1}, \beta e_1}(\lambda)$.

The standard GCV function may not perform well for certain types of problems. For example, in statistical nonparametric modeling, the GCV function might choose parameters that are too small and thus produce a highly oscillatory approximate solution \cite{chung2008weighted}. In our study, we find that the approximate solution is overly-smooth. To avoid this phenomenon, weighted-GCV is used, where the weighted-GCV function for the projected matrix $H_{k+1,k}$ is defined as:
 \begin{eqnarray}  G( \omega ,\lambda) &=&  \frac{k \|(I - H_{k+1,k}H_{\lambda}^{\dagger}) \beta e_1 \|_2^2} {(trace(I - \omega H_{k+1,k}H_{\lambda}^{\dagger}))^2} \label{eq:wgcv_projected1}  \\
&=& \frac{ k \beta^2 \left( \displaystyle \sum_{i=1}^{k} \left(\frac{\lambda^2}{\sigma^2_i + \lambda^2} \left[U^T_k e_{1}\right]_i \right)^2 \left(\left[U^T_k e_1 \right]_{k+1} \right)^2 \right)}{\left( 1 + \displaystyle\sum_{i=1}^k  \frac{(1-\omega)\sigma_i^2 + \lambda^2}{\sigma_i^2 + \lambda^2}\right)^2}. \label{eq:wgcv_projected2}  \end{eqnarray}  
Here, the denominator depends on a new parameter $\omega$. Similar to the selection of the regularization parameter, we find that our choice of $\omega$ impacts the smoothness of the approximate solution. Thus, we must be careful in how we select the value for $\omega$. 
If $\omega = 1$, then (\ref{eq:wgcv_projected1}) becomes the standard GCV function (\ref{eq:gcv1}). 
 If $\omega > 1$, then we are subtracting a multiple of the filter factors thus producing less smooth solutions. Likewise if $\omega < 1$, then we are adding a multiple which produces smoother solutions. Therefore, we want the value of $\omega$ to be in the following range: $0 \leq \omega \leq 1$. 

From (\ref{eq:gcv2}), it is evident that smaller regularization parameters will produce a better regularized approximate solution. As a result, we adopt the approach described in \cite{renaut2017hybrid} for selecting $\omega$. That is, we let $\omega = \frac{k+1}{m}$, where $m$ is the number of rows in the full dimension problem (\ref{eq:ip}). 

\subsubsection{Stopping Criterion} 
\label{sec:stopping}
Next we describe an approach to determine a suitable stopping criterion for Hybrid LSLU. Similar to the approach described in \cite{chung2008weighted} and inspired by \cite{bjorck1994implicit}, we assume that $\lambda$ is fixed and seek a stopping iteration $k$ that minimizes a GCV function in terms of $k$,
\begin{equation} \label{eq:stop1} \frac{n \|(I - AA_k^{\dagger})b\|_2^2}{(trace(I-AA^{\dagger}_k))^2} \approx \frac{n \| D_{k+1}^\dagger (I - A A_k^\dagger) b\|_2^2}{ ( {\rm trace} (I - A A_k^\dagger))^2} = \hat{G}(k),
\end{equation}
where $A^{\dagger}_k$ is defined by considering the approximate solution produced by Hybrid LSLU, where, without loss of generality and to simplify the notation, we have considered $x_0=0$:
$$ x_k = L_ky_{\lambda,k} = L_{k}H^{\dagger}_{\lambda} D^{\dagger}_{k+1}b \equiv A_k^{\dagger}b.$$ Since $D_{k+1}$ lacks orthonormal columns, the left-hand side of (\ref{eq:stop1}) cannot be computed exactly, as it is done in \cite{chung2008weighted}, so we use the approximation
\begin{eqnarray*} 
    n\|(I- AA^{\dagger}_k)b\|_2^2 &\approx& n \| D_{k+1}^\dagger (I - A A_k^\dagger) b\|_2^2 \\ &=& n\|(I - H_{k+1,k} H^{\dagger}_{\lambda})D^{\dagger}_{k+1}b\|_2^2
\end{eqnarray*}
where $D^{\dagger}_{k+1}AL_k = D^{\dagger}_{k+1}D_{k+1}H_{k+1,k} = H_{k+1,k}$ and $D^{\dagger}_{k+1}b = \beta e_1$. Using the SVD of $H_{k+1,k}$, the previous expression can be rewritten as:
$$n \| D_{k+1}^\dagger (I - A A_k^\dagger) b\|_2^2 = n \beta^2 \left(\displaystyle \left(\sum_{i=1}^{k} \frac{\lambda^2}{\sigma^2_i + \lambda^2} + \left[U^T_k e_1 \right]_i\right)^2 + \left(\left[ U^T_k e_1 \right]_{k+1} \right)^2 \right). 
$$
The denominator of (\ref{eq:stop1}) is equivalent to:
\begin{eqnarray*} (trace(I - AA_k^{\dagger}))^2 &=&  (trace(I - AL_k H^{\dagger}_{\lambda}D^{\dagger}_{k+1}))^2 \\ &=& (trace(I - D_{k+1}H_{k+1,k}H^{\dagger}_{\lambda}D^{\dagger}_{k+1}))^2 \\ &=& (trace(I) - trace(H_{k+1,k}H^{\dagger}_{\lambda}))^2 \\ &=& \left((m-k) + \displaystyle\sum_{i=1}^{k} \frac{\lambda_k^2}{\sigma_i^2 + \lambda_k^2}\right)^2.
 \end{eqnarray*}
Therefore the left-hand side of (\ref{eq:stop1}) can be approximated by \begin{equation} \label{eq:stop2} \hat{G}(k) = \frac{n \beta^2 \left(\displaystyle \left(\sum_{i=1}^{k} \frac{\lambda^2}{\sigma^2_i + \lambda^2} + \left[U^T_k e_1 \right]_i\right)^2 + \left(\left[ U^T_k e_1 \right]_{k+1} \right)^2 \right)}{ \left((m-k) + \displaystyle\sum_{i=1}^{k} \frac{\lambda_k^2}{\sigma_i^2 + \lambda_k^2}\right)^2}.
\end{equation}
$\hat{G}(k)$ is used to determine the stopping iteration, $k$. The algorithm will terminate when the difference between the values is small: 
\begin{equation} 
\label{eq:tol} \left|\frac{\hat{G}(k+1) - \hat{G}(k)}{\hat{G}(1)} \right|  < tol\,. 
\end{equation}

\section{Numerical Results}\label{sec:num}
We now illustrate the effectiveness of Hybrid LSLU in comparison to Hybrid LSQR \cite{paige1982lsqr}. We use three different test problems: PRtomo, PRspherical, and PRseismic from the IR tools package \cite{gazzola2018ir}. PRtomo generates data for X-ray tomographic reconstruction problems. PRspherical formulates  a tomography test problem based on the spherical Radon transform where data consists of integrals along circles. This type of problem arises in photoacoustic imaging. PRseismic creates a seismic travel-time tomography problem. These problems involve images with $256 \times 256$ pixels and correspond to a matrix $A$ that is $65160 \times 65536$ (PRtomo), $65522 \times 65536$ (PRspherical), and $131072 \times 65536$ (PRseismic) with a noise level of $$ \frac{\| e \|}{\| Ax_{true} \|} = 10^{-2}.$$
The noisy observations are provided in the top row of \Cref{fig:PR}.  

\subsection{Regularization and reconstruction performance}

We compute the reconstructed images for each problem using the proposed Hybrid LSLU method, using wGCV to select the regularization parameter and GCV for the stopping criterion.  The reconstructions are provided in the bottom row of \Cref{fig:PR}.

\begin{figure}[ht]
\centering
\begin{tabular}{ccc}
    {\scriptsize PRtomo} &  {\scriptsize PRspherical} & {\scriptsize PRseismic}\\ 
    \includegraphics[width=3.9cm]{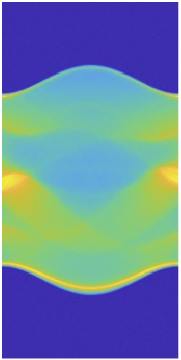} &  \includegraphics[width=3.9cm]{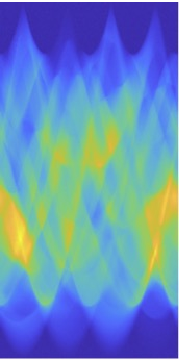} &
    \includegraphics[width=3.9cm]{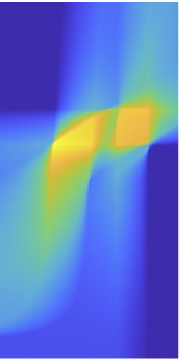} \\
\end{tabular} \\
\begin{tabular}{ccc}
    \includegraphics[width=3.9cm]{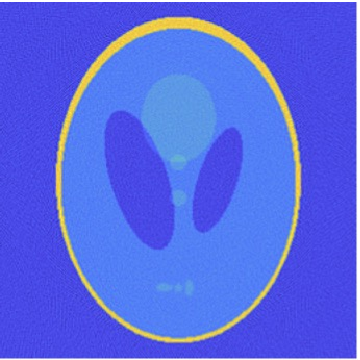} &  \includegraphics[width=3.9cm]{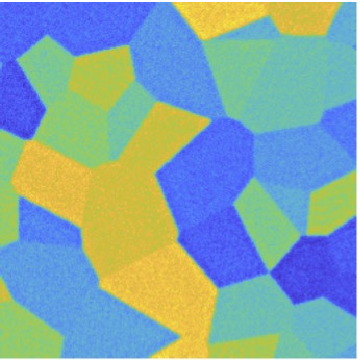} &
    \includegraphics[width=3.9cm]{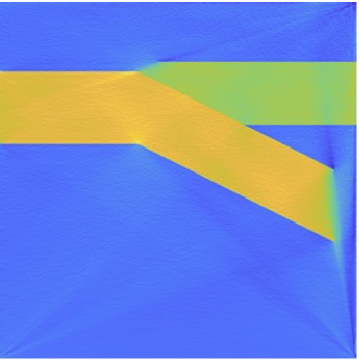} \\
\end{tabular}
\caption{Measured noisy data, $b$ (top row) and reconstructed images using Hybrid LSLU (bottom row). The image proportions are accurate but, to aid visualization, the relative size between images is not.}
\label{fig:PR}
\end{figure}

Next, in \Cref{fig:PR2} we provide the relative reconstruction error norms per iteration of Hybrid LSLU with both the wGCV and optimal regularization parameter.  Results for Hybrid LSQR with wGCV are provided for comparison. \begin{figure}[h]
\centering
\begin{tabular}{ccc}
    {\scriptsize PRtomo} &  {\scriptsize PRspherical} & {\scriptsize PRseismic} \\ 
   \includegraphics[width=3.9cm]{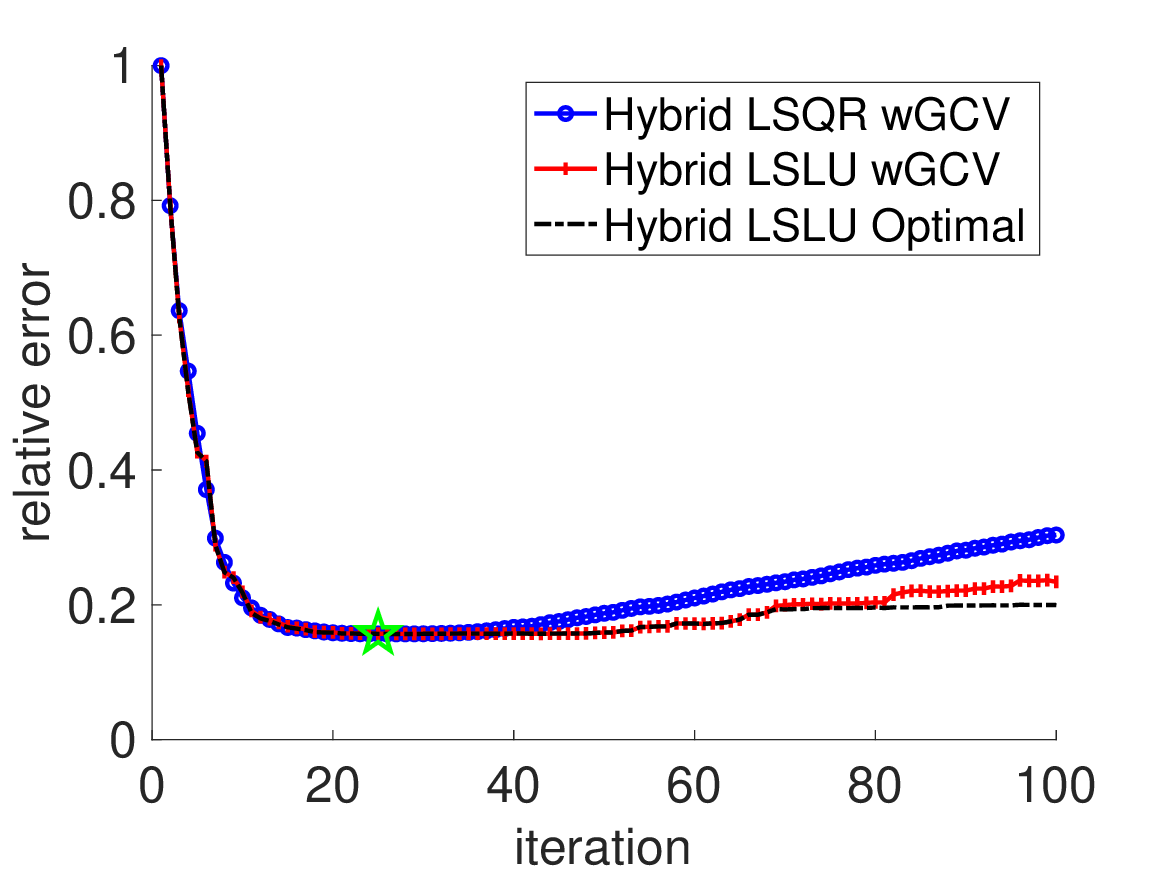} &  \includegraphics[width=3.9cm] {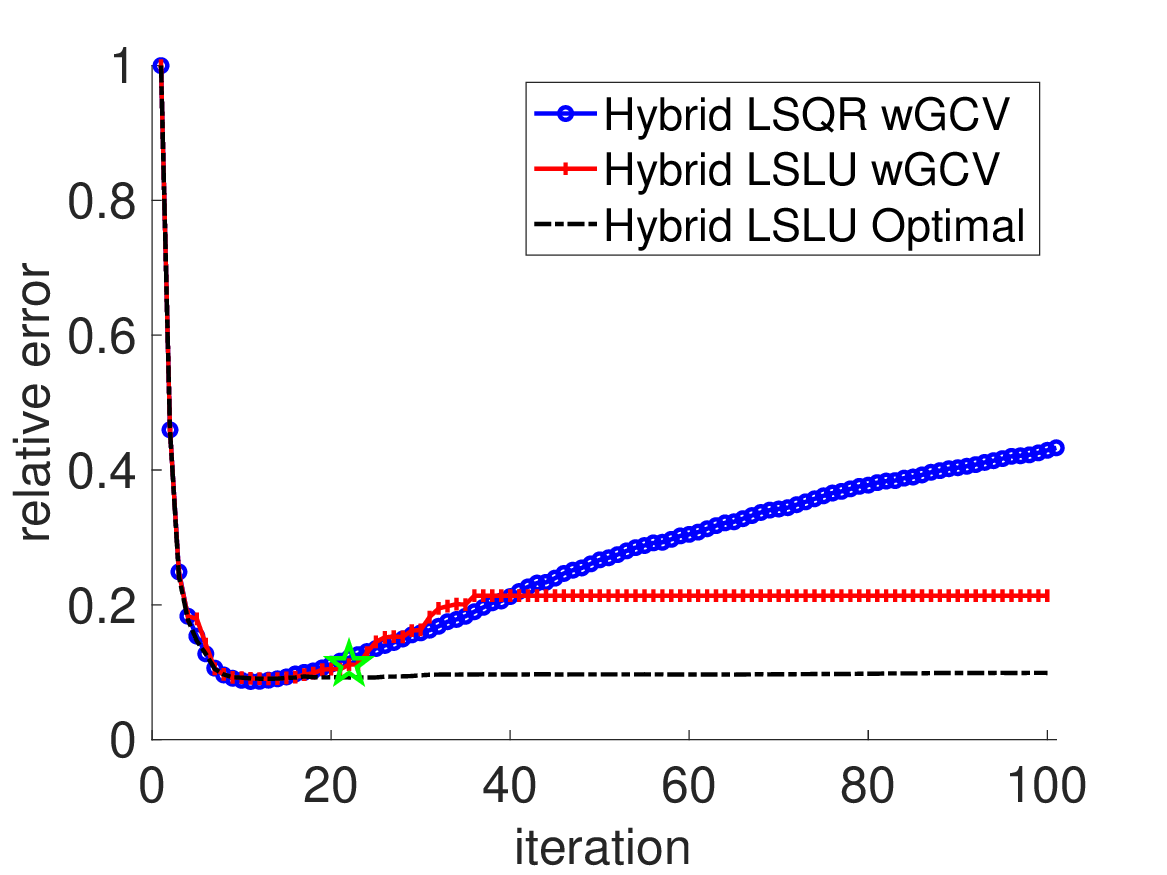} &
   \includegraphics[width=3.9cm]{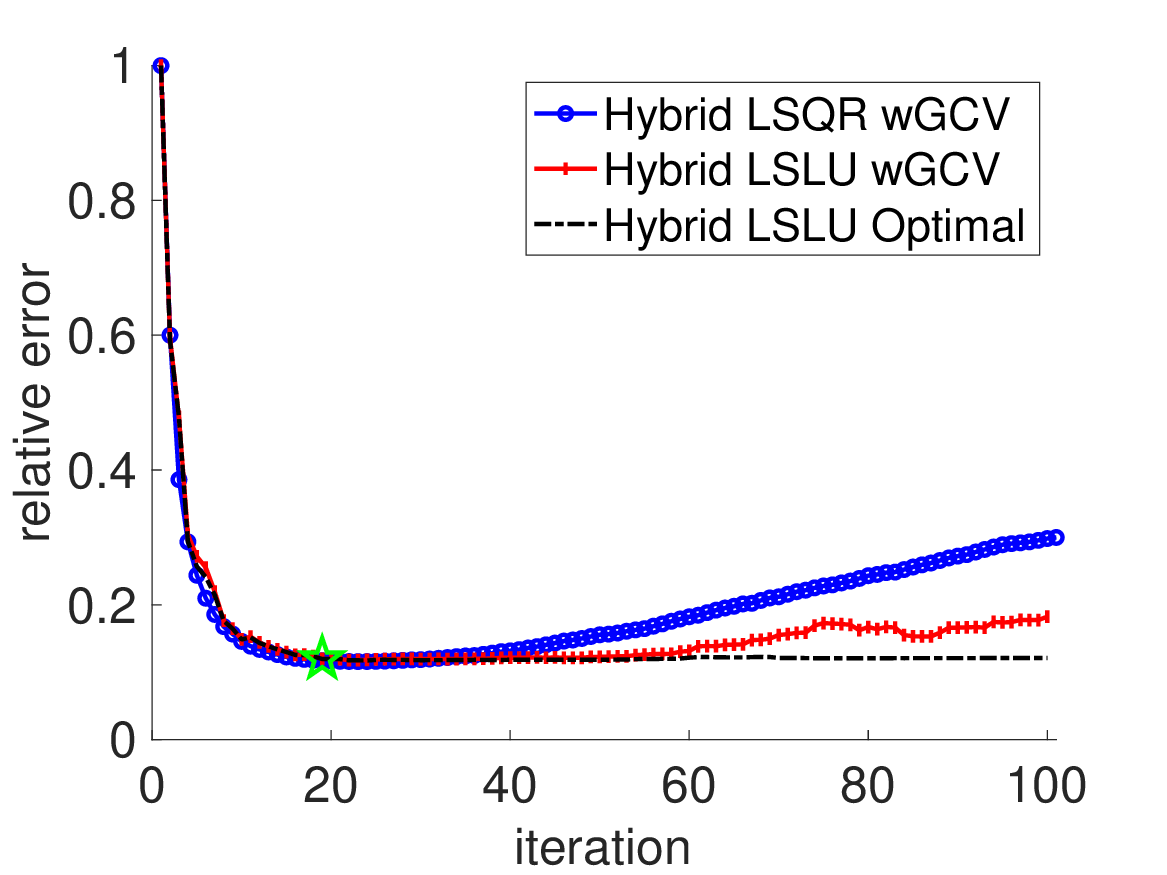} \\
    \end{tabular}
\caption{Relative reconstruction error norms per iteration of Hybrid LSLU with wGCV and the optimal regularization parameter. The automatically selected stopping iteration is highlighted with a star. Results for Hybrid LSQR with wGCV are provided for reference.}
\label{fig:PR2}
\end{figure}
From the Hybrid LSLU with optimal regularization plot, we observe that relative reconstruction error norms decrease and flatten at a nearly optimal value, which means that if a suitable choice of the regularization parameter is selected, Hybrid LSLU can provide a good regularized solution. From the Hybrid LSLU wGCV curve, we see that the relative reconstruction errors decay initially but increase slightly before leveling out.  The error is still smaller than that of Hybrid LSQR, and we remark that the stopping criteria for Hybrid LSLU performs well.
We observe that the Hybrid LSQR method exhibits faster semiconvergence, implying that the wGCV method is not a good regularization parameter choice for Hybrid LSQR in this scenario. These results demonstrate that Hybrid LSLU can provide comparable performance to Hybrid LSQR, with the same storage requirements, lower computation cost and with the benefit of being inner-product free.

\begin{center}
\begin{table}[h!]
\begin{tabular}{ |c|c|c|c|c| } 
\hline
\multicolumn{5}{|c|}{PRtomo} \\
\hline
Method & Noise Level & Stopping Iteration & Reg Parameter & Relative Error\\
\hline
\multirow{3}{4em}{Hybrid LSLU} & $10^{-3}$ & $21$ & $0.0046$ & $0.1436$ \\ 
& $10^{-2}$ & $25$ & $0.0099$ & $0.1571$ \\ 
& $10^{-1}$ & $20$ & $0.0285$ & $0.6211$ \\ 
\hline
\multirow{3}{4em}{Hybrid LSQR} & $10^{-3}$ & $50$ & $0.0051$ & $0.1285$ \\ 
& $10^{-2}$ & $99$ & $0.0109$ & $0.3035$ \\ 
& $10^{-1}$ & $100$ & $0.0105$ & $3.0472$ \\ 
\hline
\end{tabular} 
\\
\begin{tabular}{ |c|c|c|c|c| } 
\hline
\multicolumn{5}{|c|}{PRspherical} \\
\hline
Method & Noise Level & Stopping Iteration & Reg Parameter & Relative Error\\
\hline
\multirow{3}{4em}{Hybrid LSLU} & $10^{-3}$ & $31$ & $8.4043 \times 10^{-5}$ & $0.0547$ \\ 
& $10^{-2}$ & $22$ & $1.3522 \times 10^{-4}$ & $0.1112$ \\ 
& $10^{-1}$ & $22$ & $1.1716 \times 10^{-4}$ & $1.1714$ \\ 
\hline
\multirow{3}{4em}{Hybrid LSQR} & $10^{-3}$ & $43$ & $6.5967 \times 10^{-5}$ & $0.0523$ \\ 
& $10^{-2}$ & $100$ & $7.0850 \times 10^{-5}$ & $0.4329$ \\ 
& $10^{-1}$ & $100$ & $6.4423 \times 10^{-5}$ & $4.4158$ \\ 
\hline
\end{tabular}

\begin{tabular}{ |c|c|c|c|c| } 
\hline
\multicolumn{5}{|c|}{PRseismic} \\
\hline
Method & Noise Level & Stopping Iteration & Reg Parameter & Relative Error\\
\hline
\multirow{3}{4em}{Hybrid LSLU} & $10^{-3}$ & $24$ & $0.275$ & $0.1010$ \\ 
& $10^{-2}$ & $19$ & $0.0476$ & $0.1198$ \\ 
& $10^{-1}$ & $48$ & $0.0379$ & $0.8514$ \\ 
\hline
\multirow{3}{4em}{Hybrid LSQR} & $10^{-3}$ & $50$ & $0.0113$ & $0.0875$\\ 
& $10^{-2}$ & $100$ & $0.0277$ & $0.2999$ \\ 
& $10^{-1}$ & $100$ & $0.0259$ & $3.1474$ \\ 
\hline
\end{tabular}

\caption{Numerical results for the three test problems PRtomo, PRspherical, and PRseismic for various noise levels.  Regularization parameters and relative errors correspond to values at the stopping iteration.}
\label{tab:Table1}
\end{table}
\end{center}

The performance of Hybrid LSLU and Hybrid LSQR is similar for various noise levels.  In \Cref{tab:Table1}, we provide the automatically selected stopping iteration, the computed regularization parameter using wGCV, and the relative reconstruction error norm, for noise levels $10^{-3}$, $10^{-2}$, and $10^{-1}$.  We remark that the results for $10^{-2}$ are consistent with the results in \Cref{fig:PR2}. We observe that for lower noise levels, Hybrid LSLU and Hybrid LSQR perform comparably, but as the noise level increases,  
Hybrid LSLU appears to perform better for all three test problems.  This may be attributed to the stopping criteria and selected regularization parameter that result in better reconstructions for Hybrid LSLU.

\begin{figure}[h]
\centering
\includegraphics[width=\textwidth]{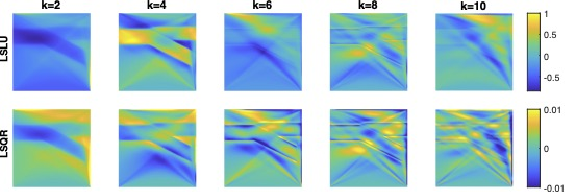}
\caption{Basis vectors for the Krylov subspace (\ref{eq:K1}) generated by LSLU and LSQR at iterations $k=2,4,6,8,10$ for the PRseismic example.}
\label{fig:PR3}
\end{figure}

\begin{figure}[t]
\centering
\includegraphics[width=\textwidth]{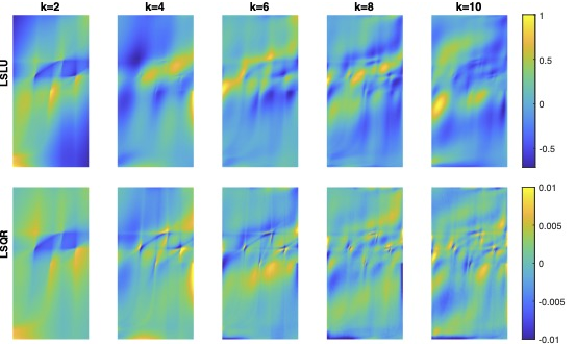}
\caption{Basis vectors for the Krylov subspace (\ref{eq:K2}) generated by LSLU and LSQR at iterations $k=2,4,6,8,10$ for the PRseismic example.}
\label{fig:PR4}
\end{figure}

Finally, for the PRseismic example, we investigate the images created by the basis vectors in \Cref{fig:PR3,fig:PR4}.  Recall that in Hybrid LSLU, two sets of basis vectors are being constructed in an inner-product free manner, one for each of the Krylov subspaces \eqref{eq:K1} and \eqref{eq:K2}. We display $5$ of the columns of the basis vectors from $L_k$ and $D_k$ reshaped into images of corresponding sizes for Hybrid LSLU, and provide the corresponding columns of the basis vectors generated via Hybrid LSQR for comparison.  We observe that although the vectors generated via Hybrid LSLU
at the $k$th iteration span the same subspace as the vectors generated via Hybrid LSQR, they have different features. We observe that the columns of $L_k$ retrieve some characteristics of the true solution in early iterations; hence we expect this to be a good basis for the solution. The columns of $D_k$ contain information regarding the measurement $b \in \mathbb{R}^m$ or residual space, where the columns are basis vectors for ${\cal{K}}_k(AA^T,r_0)$. The ability of $L_k$ to contain parts of the regularized solution is beneficial in helping to produce an accurate approximation of the true solution. 

These images help to understand how different methods pick up different information. LSQR basis picks up the high-frequency information, due to the orthogonality requirement, and has a nice connection to SVD (frequency analysis). LSLU seems to picks up high-frequency information but doesn't project out previous vectors.

\subsection{Low-rank approximation for uncertainty quantification}
A natural question is how the low-rank approximation resulting from the inner-product free Hessenberg process can be exploited for efficient uncertainty quantification.  We follow similar works that use low-rank perturbative approximations for the posterior covariance matrix, see e.g., ~\cite{flath2011fast,bui2012extreme, bui2013computational, spantini2015optimal,saibaba2015fastc}. 
We focus on the simple Gaussian, linear case where the model is given by \eqref{eq:ip} with noise $e \sim \mathcal{N}(0, \sigma^2 I)$, prior $x \sim \mathcal{N}(0, \alpha^2 I)$, and posterior $x \mid b \sim \mathcal{N}(x_{\rm MAP}, \Gamma)$ with $x_{\rm MAP}$ the solution to optimization problem \eqref{eq:vr} with $\lambda = \frac{\sigma}{\alpha}$ and $\Gamma = (\frac{1}{\alpha^2}I + \frac{1}{\sigma^2} A^T A)^{-1}$. Assume that a hybrid projection method such as Hybrid LSLU was used to compute an estimate $x_{\lambda_k,k}$ of $x_{\rm MAP}$ where $\lambda_k$ was determined using techniques in Section \ref{sec:paramselect}. After $k$ iterations of the LSLU projection process, we have matrices $L_k$ and $D_{k}$ with linearly independent columns, $H_{k+1,k}$, and $W_{k}$ that satisfy \eqref{eq:nhr} and \eqref{eq:nhr2}. Consider the following low rank approximation,
\begin{align}
A^T A & \approx A^T D_{k} D_{k}^\dagger A \\
& = L_{k} W_{k} (D_{k}^T D_{k})^{-1} W_{k}^T L_{k}^T \,.
\end{align}
Let $G_{k}^T G_{k}  = W_{k} (D_{k}^T D_{k})^{-1} W_{k}^T = V^\text{\tiny G} (\Sigma^\text{\tiny G})^T \Sigma^\text{\tiny G} (V^\text{\tiny G})^T$ be its eigenvalue decomposition with
eigenvalues $(\sigma_1^\text{\tiny G})^2, \ldots, (\sigma_k^\text{\tiny G})^2$
and let $Z_k = L_{k} V^\text{\tiny G}$, then we get the following low-rank approximation,
 \begin{equation}
   \label{eq:GKapprox}
   A^T A \approx \> L_k G_{k}^T G_{k} L_k^\top = \> Z_k (\Sigma^\text{\tiny G})^T \Sigma^\text{\tiny G} Z_k^T\,.
 \end{equation}

In the following, we investigate the use of low-rank approximation \eqref{eq:GKapprox} for estimating solution variances and sum of variances and covariances, as was done in \cite{chung2024computational}. The posterior variances provide a measure of the spread of the posterior distribution around the posterior mean and correspond to the diagonal elements of the posterior covariance matrix $\Gamma$.  For many problems, $\Gamma$ is large and dense, so forming it explicitly to obtain the diagonal entries may be infeasible.

Assume that we have an estimate of the noise variance $\sigma^2$ and fix $\lambda$.  Here we use the true noise variance and the regularization parameter from Hybrid LSQR wGCV. Then using~\cref{eq:GKapprox} and the Woodbury formula, we obtain the approximation
 \begin{align*}
 \Gamma = \sigma^2 (\lambda I_n +  A^T A)^{-1} \approx & \>  \sigma^2 (\lambda I_n + Z_k (\Sigma^\text{\tiny G})^T \Sigma^\text{\tiny G} Z_k^T)^{-1} \\
  = &  \>    \sigma^2 ({\lambda^{-1}} I_n - \lambda^{-1}Z_k (Z_k^T Z_k  + \lambda ((\Sigma^\text{\tiny G})^T \Sigma^\text{\tiny G} )^{-1})^{-1} Z_k^T )\\\
  = &  \>   \sigma^2 ({\lambda^{-1}} I_n - Z_k\Delta_k Z_k^T) =: \Gamma_k
 \end{align*}
 where
 \begin{equation}
 \Delta_k \equiv (Z_k^T Z_k  + \lambda ((\Sigma^\text{\tiny G})^T \Sigma^\text{\tiny G} )^{-1})^{-1}.
\end{equation}
Notice that contrary to previous approaches, $Z_k$ does not contain orthonormal columns and thus $\Delta_k$ is not a diagonal matrix.  However, it is a $k \times k$ matrix, so for reasonably sized $k$, this computation is not a burden.

Notice that we have an efficient representation of $\Gamma_k$ as a low-rank perturbation of the prior covariance matrix, $\sigma^2\lambda^{-1} I_n$.
Thus, diagonal entries of $\Gamma_k$ can provide estimates of diagonal entries of $\Gamma$, where the main computational requirement is to obtain the diagonals of the rank-$k$ perturbation. In addition, one can approximate the sum of all values in the posterior covariance matrix $1^T \Gamma\, 1$ as
\begin{equation}
\label{eq:UQsum}
     1^T \Gamma\, 1 =  \sigma^2 ({\lambda^{-1}} n - 1^T Z_k \Delta_k Z_k^T 1)
\end{equation}
 where $1$ is an $n \times 1$ vector of ones.

In the left plot of \cref{fig:UQ}, we provide estimates of \cref{eq:UQsum} at various iterations $k$ of the LSQR process and the LSLU process. We observe that the LSLU approximations of the sum of elements in the posterior covariance matrix are nearly indistinguishable from the LSQR approximations.  The absolute difference per iteration is provided for reference. For the automatically selected stopping iteration, denoted with a vertical line, we provide an image of the solution variances (corresponding to the diagonal entries of $\Gamma_k$) for both LSQR and LSLU. 

\begin{figure}[bt]
\begin{center}
\begin{tabular}{cc}
\includegraphics[width=6cm]{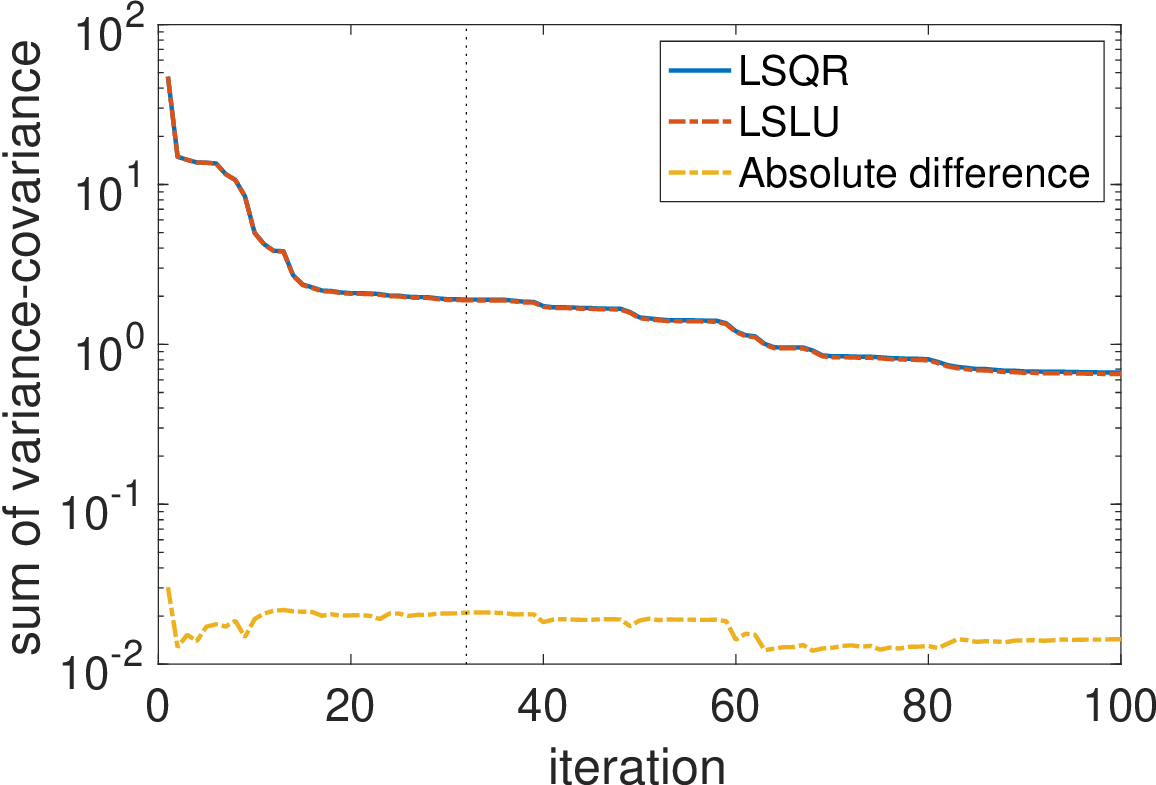} &
\includegraphics[width=5.9cm]{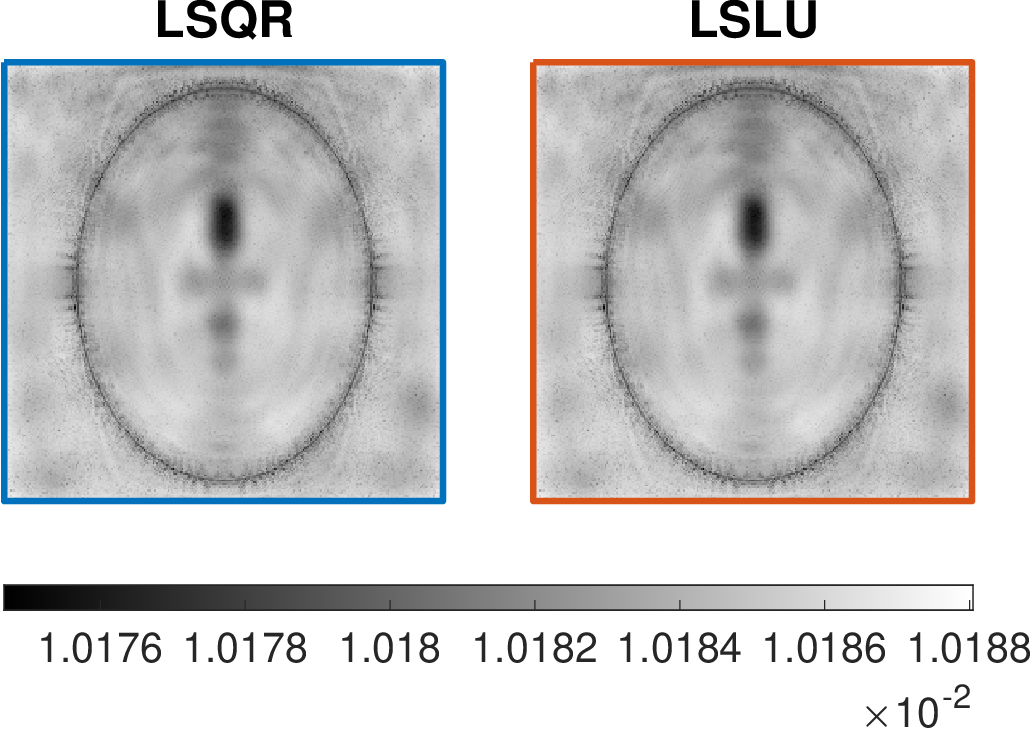} \end{tabular}
\caption{For the PRtomo example, we provide estimates of $1^T \Gamma_k\, 1$ per iteration $k$
using both the Golub-Kahan bidiagonalization approximation (denoted LSQR) and the inner-product free Hessenberg approximation (denoted LSLU) approximation.  Solution variances computed at the stopping iterate (corresponding to the vertical line) are provided for LSQR and LSLU on the right. }
\label{fig:UQ}
\end{center}
\end{figure}

The results for uncertainty quantification estimation using the low-rank approximations from the Hessenberg process (Hybrid LSLU) and the Golub-Kahan bidiagonalization process (Hybrid LSQR) are  very close. 
Both approaches have similar storage costs, but the Hessenberg process has the added benefits of avoiding inner products and avoiding reorthogonalization costs. 
The tradeoff is that since the basis vectors are no longer orthonormal, $\Delta_k$ is no longer diagonal, and we must work with a $k \times k$ matrix.

\section{Conclusions}\label{sec:conc}
In this paper, we introduced two new inner-product free Krylov methods for rectangular large-scale linear ill-posed inverse problems. Based on our numerical observations, the Hybrid LSLU
method is comparable to Hybrid LSQR in its ability to select regularization parameters during the iterative process and stabilize semiconvergence.  Both approaches only require matrix-vector multiplications with $A$ and its transpose, making them appealing for large-scale problems. Hybrid LSLU has the added benefit of being inner-product free which could be useful in solving problems with mixed-precision and parallel computing.

\section*{Acknowledgments}
This work was partially funded by the U.S. National Science Foundation, under grants DMS-2038118\@, DMS-2341843 and DMS-2208294. Any opinions, finding, and conclusions or recommendations expressed in this material are those of the author(s) and do not necessarily reflect the views of the National Science Foundation. 

\bibliographystyle{siamplain}
\bibliography{references}

\end{document}